\theoremstyle{thmstyleone}%
\newtheorem{theorem}{Theorem}%  meant for continuous numbers
\theoremstyle{thmstyletwo}%
\theoremstyle{thmstylethree}%
\newtheorem{definition}{Definition}%
\newcommand*{\sphline}{%
\noalign{\vskip3pt}%
\hline%
\noalign{\vskip3pt}}
\begin{document}

%\ZTPAuthor{\ZTPHasOrcid{Ksenia Bestuzheva}{0000-0002-7018-7099}, Helena V\"olker, \ZTPHasOrcid{Ambros Gleixner}{0000-0003-0391-5903}}
%\ZTPTitle{Strengthening SONC Relaxations with Constraints Derived from Variable Bounds}
%\ZTPNumber{23-??}
%\ZTPMonth{January}
%\ZTPYear{2023}

\title{Strengthening SONC Relaxations with Constraints Derived from Variable Bounds}

%%=============================================================%%
%% Prefix	-> \pfx{Dr}
%% GivenName	-> \fnm{Joergen W.}
%% Particle	-> \spfx{van der} -> surname prefix
%% FamilyName	-> \sur{Ploeg}
%% Suffix	-> \sfx{IV}
%% NatureName	-> \tanm{Poet Laureate} -> Title after name
%% Degrees	-> \dgr{MSc, PhD}
%% \author*[1,2]{\pfx{Dr} \fnm{Joergen W.} \spfx{van der} \sur{Ploeg} \sfx{IV} \tanm{Poet Laureate} 
%%                 \dgr{MSc, PhD}}\email{iauthor@gmail.com}
%%=============================================================%%

\author[1]{Ksenia Bestuzheva}

\author[2]{Helena V\"olker}

\author[3]{Ambros Gleixner}

\affil[1]{Zuse Institute Berlin, Takustr.~7, 14195 Berlin, Germany\\
          \texttt{bestuzheva@zib.de}}

\affil[2]{Zuse Institute Berlin, Takustr.~7, 14195 Berlin, Germany\\
          \texttt{voelker@zib.de}}

\affil[3]{Zuse Institute Berlin and HTW Berlin, Germany\\
          \texttt{gleixner@zib.de}}

%\zibtitlepage
\maketitle

\begin{abstract}Nonnegativity certificates can be used to obtain tight dual bounds for polynomial
optimization problems.
Hierarchies of certificate-based relaxations ensure convergence to the global optimum, but higher
levels of such hierarchies can become very computationally expensive, and the well-known sums of
squares hierarchies scale poorly with the degree of the polynomials.
This has motivated research into alternative certificates and approaches to global optimization.
We consider sums of nonnegative circuit polynomials (SONC) certificates, which are well-suited for sparse
problems since the computational cost depends on the number of terms in the
polynomials and does not depend on the degrees of the polynomials.
We propose a method that guarantees that given finite variable domains, a SONC relaxation will
yield a finite dual bound.
This method opens up a new approach to utilizing variable bounds in SONC-based methods, which is
particularly crucial for integrating SONC relaxations into branch-and-bound algorithms.
We report on computational experiments with incorporating SONC relaxations into the spatial branch-and-bound algorithm of the mixed-integer
nonlinear programming framework SCIP.
Applying our strengthening method increases the number of instances where the SONC relaxation of
the root node yielded a finite dual bound from 9 to 330 out of 349
instances in the test set.
\end{abstract}

\section{Introduction}

Polynomial optimization problems are a topic of active research in the fields of algebraic geometry
and nonlinear optimization, with applications in dynamics and
control~\cite{ahmadi2011algebraic,majumdar2013control,mattingley2010real}, wireless
coverage~\cite{commander2008jamming,commander2007wireless}, and economics and game
theory~\cite{sturmfels2002solving}, to name a few.
These problems are, in general, nonlinear and nonconvex, making finding the global optimum a
difficult task.
Furthermore, polynomials with high degrees present a particular challenge since the nonlinearity is
more pronounced in such polynomials.
Existing optimization techniques often rely specifically on linear or quadratic structures in problems,
and even those methods that focus on general polynomial optimization problems often scale poorly
with the degree.

Global polynomial optimization methods can be roughly divided into two categories.
General-purpose approaches such as spatial branch-and-bound~\cite{horst2013global,gonzalez2022computational} are versatile and can call upon a variety of
sophisticated techniques in order to speed up the solving process.
However, these algorithms rely on linear or convex relaxations of the problem, and conventional
relaxations lack the means to efficiently capture polynomial nonlinearities.
Approaches based on nonnegativity certificates~\cite{shor1987class,nesterov2000squared,lasserre2001global,parrilo2003semidefinite} are better at leveraging the structure of an entire
polynomial, as opposed to its monomial terms, in order to obtain stronger dual bounds.
The search for the global optimum, though, requires constructing computationally expensive
hierarchies of relaxations.
The goal of this paper is to help bridge the existing gap between general-purpose algorithms and certificate-based relaxations
by (i) formulating a class of valid constraints to strengthen sums of nonnegative circuit
polynomials (SONC) relaxations in the presence of finite variable bounds, thus enabling such
relaxations to benefit from decreasing domain sizes in nodes of a branch-and-bound tree, and  (ii) developing a branch-and-bound algorithm that solves SONC relaxations next to linear
relaxations.

General-purpose solution methods such as spatial branch-and-bound can solve polynomial optimization
problems as long as some linear or convex relaxation is available, for example a
linear relaxation constructed by outer approximating the power and product terms appearing in a polynomial.
The Reformulation-Linearization Technique (RLT)~\cite{adams1986tight,adams1990linearization} can be
employed to strengthen such methods.
RLT produces a family of cutting planes in a lifted space, and it has been shown to yield strong
relaxations of polynomial problems~\cite{sherali1997new,sherali2012reduced,DalkiranSherali}.
The solver RAPOSA~\cite{gonzalez2022computational} implements an RLT-based branch-and-bound algorithm aimed at
efficiently solving polynomial optimization problems.

Another approach to polynomial optimization is based on results on nonnegativity of polynomials.
Such results usually rely on the cone of polynomials that are representable as sums of squares
(SOS), and originate in the works of Shor~\cite{shor1987class}, Nesterov~\cite{nesterov2000squared},
Lasserre~\cite{lasserre2001global} and Parrilo~\cite{parrilo2003semidefinite}.
A semidefinite program is solved in order to produce an SOS nonnegativity certificate for
$f(x) - \gamma$, seeking to maximize $\gamma$.
Further developments include improving the computational efficiency of SOS-based methods by
exploiting sparsity~\cite{waki2006sums,zheng2021sum}, employing restrictions of the SOS
cone~\cite{ahmadi2019dsos}, as well as
developing software for polynomial optimization such as GloptiPoly~\cite{henrion2009gloptipoly} and
SOSTOOLS~\cite{sostools}.

The SONC certificate, based on the decomposition of a polynomial into a sum of
nonnegative circuit polynomials, was proposed by Iliman and de Wolff~\cite{SONC}.
Unlike for SOS certificates, the computational cost of computing a SONC certificate does not depend
on the degree of the polynomial.
Furthermore, since the cones of SOS and SONC polynomials do not coincide or contain one
another~\cite{SONC}, for some problems a SONC decomposition yields better bounds than an SOS
decomposition.

The cone of sums of arithmetic geometric mean exponentials (SAGE)~\cite{chandrasekaran2016relative,chandrasekaran2017relative} provides an
alternative view of the SONC cone.
The membership of a polynomial in the SAGE cone can be decided by solving a convex relative entropy
program, and the method was successfully extended to constrained optimization
problems~\cite{murray2021signomial}.

These cones of nonnegative polynomials, however, in general only provide a dual bound.
While for SOS polynomials, there is a hierarchy of relaxations that converges to the global
optimum~\cite{lasserre2001global}, solving high levels of this hierarchy can have a prohibitive computational
cost.
An alternative approach to global optimization via nonnegativity certificates is to combine them
with a branch-and-bound algorithm.
The first such integrated algorithm was proposed by Seidler~\cite{seidler2021improved}.
This algorithm branches on signs of variables, so that additional terms can be identified as
positive.
However, convergence to the global optimum is not guaranteed.

In this work, we continue to investigate the potential for combining SONC-based relaxations with
branch-and-bound algorithms.
To this end, we first address the difficulties in utilizing variable bounds when applying SONC
relaxations.
We employ a Lagrangian relaxation approach to build the SONC relaxations of constrained optimization
problems and strengthen them by polynomial constraints derived from variable bounds.
We construct these constraints in a way that aims at achieving a
structure of the exponent set of the Lagrangian function that is well-suited for obtaining a SONC
certificate.
Adding these constraints enables the root node SONC relaxation to obtain finite dual bounds for
330 out of 349 instances from our test set, whereas the standard SONC relaxation found a finite dual
bound only for 9 instances.

The second contribution of this paper is an implementation of SONC relaxations within a
general-purpose spatial branch-and-bound algorithm.
We implement these relaxations in a relaxator plugin of the MINLP framework SCIP~\cite{SCIPoptsuite80}, which applies a preprocessing step,
adds polynomial-bound constraints and calls the polynomial optimization software POEM~\cite{poem:software} in order to
obtain SONC certificates.
Although our experiments showed that SONC relaxations are not yet competitive with state-of-the-art
linear relaxations, we observed a considerable improvement in the root node dual bound on 6
instances, with SONC relaxations closing up to $91$\% of the gap as compared to linear relaxations.

The rest of the paper has the following structure.
Sections~\ref{sec:sonc} and~\ref{sec:constrained} provide a summary of the theory of SONC
certificates and their use for polynomial optimization.
Section~\ref{sec:polynomial-bound} presents the main theoretical contribution of the paper: a new method for
incorporating variable bounds into a SONC relaxation.
In Section~\ref{sec:implementation}, we discuss the implementation of SONC relaxations within the
spatial branch-and-bound algorithm of SCIP.
Finally, in Section~\ref{sec:computational}, we report the results of our computational
experiments.

%%%%%%%%%%%%%%%%%%%%%%%%%%%%%%%%%%%%%%%%%%%%%%%%%%%%%%%%%%%%%%%%%%%

\section{SONC Certificates}\label{sec:sonc}

Consider a constrained polynomial optimization problem of the form
\begin{subequations}
\label{eqn:optproblem}
\begin{align}
\min_{\mathbf{x} \in \mathbb{R}^n}~~   &f(\mathbf{x}) = \sum_{\alpha \in \mathcal{A}(f)} f_{\alpha} \mathbf{x}^{\alpha} \label{eqn:optproblemObj} \\
\text{s.t.}~~ &g_i(\mathbf{x}) = \sum_{\alpha \in \mathcal{A}(g_i)} g_{i,\alpha} \mathbf{x}^{\alpha} \geq 0, \hspace{5mm} i=1,\ldots,m, \label{eqn:optproblemConst}
\end{align}
\end{subequations}
where $f_{\alpha}, g_{i,\alpha} \in \mathbb{R}$ are nonzero coefficients of the polynomials and
$\mathcal{A}(f), ~\mathcal{A}(g_i) \subset \mathbb{N}^n$, $i = 0,\dots,m$, are supports of polynomials $f$ and $g_i$.

In this formulation, monomials are written as $\mathbf{x}^{\alpha}:=
x_{1}^{\alpha_{1}}\cdot \ldots \cdot x_{n}^{\alpha_{n}}$.
An exponent $\alpha$ is a \emph{monomial square} if the term $f_{\alpha}\mathbf{x}^{\alpha}$
satisfies $f_{\alpha}\geq 0$ and $\alpha \in (2\mathbb{N})^n$.
The \emph{Newton polytope} New$(f)$ of a polynomial $f$ with support $\mathcal{A}(f)$ is defined to be
the convex hull of the exponents of $f$, that is $\text{New}(f) = \text{conv}(\mathcal{A}(f))$.
Let $V(f)$ denote the vertices of New$(f)$ and let $\Delta(f) = \mathcal{A}(f) \setminus \text{V}(f)$
denote the set of all non-vertex exponents.
We will refer to terms that correspond to exponents in $\Delta(f)$ as inner terms of $f$.
Further, we will denote the set of exponents that correspond to monomial square terms as MoSq$(f)$,
and the remaining set of exponents as $\overline{\text{MoSq}}(f) = \mathcal{A}(f) \setminus MoSq(f)$.

SONC certificates, similarly to SOS certificates, utilize a decomposition of a polynomial into
a sum of polynomials of a special structure, such that nonnegativity of such polynomials is easy to
prove.
In the case of SONC, these basic building blocks are circuit polynomials~\cite{SONC}:
\begin{definition}
A \textit{circuit polynomial} is a polynomial of the form
\begin{equation} \label{eq:circuitpoly}
f(\mathbf{x}) = \sum_{\alpha \in V(f)} f_{\alpha} \mathbf{x}^{\alpha} + f_{\beta} \mathbf{x}^{\beta},
\end{equation}
where the vertices $\alpha \in V(f)$ are affinely independent and are monomial squares, that is,
$V(f) \subseteq \text{MoSq}(f)$.
\end{definition}

The exponent $\beta$ of the inner term of a circuit polynomial can be uniquely written as a convex
combination of vertices:
\begin{equation} \label{eqn:barycoord}
\sum_{\alpha\in V(f)} \lambda_\alpha^{(\beta)} = 1 \quad \text{ and } \quad \sum_{\alpha\in V(f)} \lambda_\alpha^{(\beta)} \alpha = \beta.
\end{equation}
The weights $\lambda_\alpha^{(\beta)}$ are referred to as \textit{barycentric coordinates} of $\beta$. 
For a circuit polynomial, one can compute the circuit number
\begin{equation}\label{eqn:circuitnumb}
\theta_f\left(\beta\right) = \prod_{\alpha \in V(f)} \left(\dfrac{f_{\alpha}}{\lambda_\alpha^{(\beta)}} \right)^{\lambda_\alpha^{(\beta)}}.
\end{equation}

Nonnegativity of a circuit polynomial can be decided by comparing the circuit number to the coefficient of the inner term~\cite{SONC}:
\begin{theorem}\label{the:circuitnonneg}
A circuit polynomial $f$ as given in~\eqref{eq:circuitpoly} is nonnegative if and only if 
\begin{equation*}
\vert f_{\beta} \vert \leq \theta_f\left(\beta\right) \text{ and } \beta \notin \left(2 \mathbb{N} \right)^n \hspace{5mm} \text{ or } \hspace{5mm} f_{\beta} \geq -\theta_f\left(\beta\right) \text{ and } \beta \in \left(2 \mathbb{N}  \right)^n.
\end{equation*}
\end{theorem}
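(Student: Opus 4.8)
The plan is to reduce the statement to the weighted arithmetic--geometric mean (AM--GM) inequality applied to the vertex monomials. The crucial observation is that each vertex exponent $\alpha \in V(f)$ is a monomial square, so $f_\alpha \geq 0$ and $\mathbf{x}^\alpha = \prod_i |x_i|^{\alpha_i} \geq 0$ for every real $\mathbf{x}$. Setting $y_\alpha := (f_\alpha / \lambda_\alpha^{(\beta)})\, \mathbf{x}^\alpha \geq 0$ (well defined since, for a circuit polynomial, $\beta$ lies in the relative interior of $\mathrm{conv}(V(f))$ and hence every $\lambda_\alpha^{(\beta)} > 0$) and using the weights $\lambda_\alpha^{(\beta)}$, weighted AM--GM gives $\sum_\alpha \lambda_\alpha^{(\beta)} y_\alpha \geq \prod_\alpha y_\alpha^{\lambda_\alpha^{(\beta)}}$. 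The left-hand side is exactly $\sum_{\alpha \in V(f)} f_\alpha \mathbf{x}^\alpha$, while the barycentric identity $\sum_\alpha \lambda_\alpha^{(\beta)} \alpha = \beta$ from~\eqref{eqn:barycoord} turns the right-hand side into $\theta_f(\beta)\,\prod_i |x_i|^{\beta_i} = \theta_f(\beta)\, |\mathbf{x}^\beta|$. Thus the key estimate is $\sum_{\alpha \in V(f)} f_\alpha \mathbf{x}^\alpha \geq \theta_f(\beta)\,|\mathbf{x}^\beta|$ for all $\mathbf{x}$.

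For the sufficiency (``if'') direction I would split on the parity of $\beta$. If $\beta \in (2\mathbb{N})^n$, then $\mathbf{x}^\beta = |\mathbf{x}^\beta| \geq 0$, so the estimate yields $f(\mathbf{x}) \geq (\theta_f(\beta) + f_\beta)\,|\mathbf{x}^\beta|$, which is nonnegative whenever $f_\beta \geq -\theta_f(\beta)$. If $\beta \notin (2\mathbb{N})^n$, then $f_\beta \mathbf{x}^\beta \geq -|f_\beta|\,|\mathbf{x}^\beta|$, so $f(\mathbf{x}) \geq (\theta_f(\beta) - |f_\beta|)\,|\mathbf{x}^\beta|$, which is nonnegative whenever $|f_\beta| \leq \theta_f(\beta)$. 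These are exactly the two stated conditions.

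The necessity (``only if'') direction is where the real work lies, and it rests on the sharpness of AM--GM. Equality forces all $y_\alpha$ to coincide, i.e.\ $(f_\alpha / \lambda_\alpha^{(\beta)})\, \mathbf{x}^\alpha$ must be independent of $\alpha$. I would first produce such an equality point in the positive orthant: setting $u_i = \log x_i$, the conditions become the affine system $\langle \alpha, u\rangle + \log(f_\alpha/\lambda_\alpha^{(\beta)}) = \mathrm{const}$ over $\alpha \in V(f)$, which is solvable precisely because the vertices are affinely independent. At the resulting point $\mathbf{x}^\ast > 0$ one has $f(\mathbf{x}^\ast) = (\theta_f(\beta) + f_\beta)\,(\mathbf{x}^\ast)^\beta$ with $(\mathbf{x}^\ast)^\beta > 0$; hence if $\beta \in (2\mathbb{N})^n$ and $f_\beta < -\theta_f(\beta)$, then $f(\mathbf{x}^\ast) < 0$, contradicting nonnegativity.

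It remains to treat $\beta \notin (2\mathbb{N})^n$ with $|f_\beta| > \theta_f(\beta)$, and here the parity of $\beta$ supplies the extra freedom. Since some coordinate $\beta_j$ is odd while every vertex exponent is even, flipping the sign of $x_j^\ast$ leaves each vertex monomial $(\mathbf{x}^\ast)^\alpha$ unchanged but negates $(\mathbf{x}^\ast)^\beta$. Choosing the sign of $x_j$ so that $f_\beta\,\mathbf{x}^\beta < 0$ then gives $f(\mathbf{x}) = (\theta_f(\beta) - |f_\beta|)\,|(\mathbf{x}^\ast)^\beta| < 0$, again contradicting nonnegativity. I expect the main obstacle to be the careful construction of the equality point together with the verification that affine independence is exactly what makes the log-linear system solvable; the sign-flipping argument and the sufficiency direction are comparatively routine once the AM--GM estimate is in place.
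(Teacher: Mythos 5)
The paper offers no proof of Theorem~\ref{the:circuitnonneg} at all: it is quoted as a known result and deferred to Iliman and de Wolff~\cite{SONC}. Your argument therefore has to stand on its own, and it does; moreover, it is essentially the standard proof of this fact in the literature. The key estimate is right: weighted AM--GM with weights $\lambda_\alpha^{(\beta)}$ applied to $y_\alpha = (f_\alpha/\lambda_\alpha^{(\beta)})\mathbf{x}^\alpha \geq 0$ gives $\sum_{\alpha \in V(f)} f_\alpha \mathbf{x}^\alpha \geq \theta_f(\beta)\,\vert \mathbf{x}^\beta\vert$ for all $\mathbf{x}$, using~\eqref{eqn:barycoord} to collapse the product; the parity split then yields sufficiency exactly as you state. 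For necessity, the log-linear system $\langle \alpha, u\rangle + \log(f_\alpha/\lambda_\alpha^{(\beta)}) = C$, $\alpha \in V(f)$, is indeed solvable because affine independence of the $\vert V(f)\vert$ vertices makes the difference vectors $\alpha - \alpha^{(0)}$ linearly independent, so the induced linear map is surjective; at the resulting point $\mathbf{x}^\ast > 0$ one gets $f(\mathbf{x}^\ast) = (\theta_f(\beta) + f_\beta)(\mathbf{x}^\ast)^\beta$, and the sign flip in an odd coordinate of $\beta$ leaves all vertex monomials invariant since vertex exponents are even. So where the paper buys brevity by citation, you supply a correct, self-contained elementary argument.

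One caveat is worth flagging: your proof requires every $\lambda_\alpha^{(\beta)} > 0$, which you justify by asserting that $\beta$ lies in the relative interior of $\mathrm{conv}(V(f))$ ``for a circuit polynomial.'' That is true under the original definition in~\cite{SONC}, where the support forms a circuit (a minimally affinely dependent set), but the present paper's Definition~1 only requires $\beta$ to be a non-vertex exponent, so $\beta$ could lie on a proper face of the simplex and some $\lambda_\alpha^{(\beta)}$ could vanish. In that degenerate case $\log(f_\alpha/\lambda_\alpha^{(\beta)})$ is undefined, the circuit number~\eqref{eqn:circuitnumb} itself needs the convention that factors with $\lambda_\alpha^{(\beta)} = 0$ contribute $1$, and your equality-point construction must be restricted to $nz(\beta)$ and supplemented by a limiting argument: pick a linear functional exposing the minimal face containing $\beta$, scale along the corresponding curve, and let the vertex terms outside that face tend to zero relative to the face terms. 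This is a repair, not a different proof, and under the cited definition your argument is complete as written; just be aware that it does not literally cover the boundary case that the paper's looser wording admits.
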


By utilizing circuit polynomials, Iliman and de Wolff~\cite{SONC} proposed a new class of
nonnegative polynomials:
\begin{definition}\label{def:sonc}
A polynomial $f$ is a SONC polynomial if it is of the form
\begin{equation}
f(\mathbf{x}) = \sum_{i=1}^\ell c_i f_i(\mathbf{x})
\end{equation}
where $c_i \geq 0$ are nonnegative coefficients and $f_i$ are nonnegative circuit polynomials for
all $i=1,\ldots,\ell$.
\end{definition}

Another class of polynomials that is of interest in the context of SONC decompositions is
the class of ST-polynomials.
Similarly to circuit polynomials, the vertices of ST-polynomials are monomial squares that are
affinely independent, or, in other words, form a simplex.
The difference is that ST-polynomials can have multiple inner terms.

\begin{definition} \label{def:STpoly}
A polynomial $f$ is an \emph{ST-polynomial}~\cite{Lower} if it has the form
\begin{equation} \label{eqn:STpoly}
f(\mathbf{x}) = \sum_{\alpha \in V(f)} f_{\alpha} \mathbf{x}^{\alpha} + \sum_{\beta \in \Delta(f)} f_{\beta} \mathbf{x}^{\beta}
\end{equation}
such that New$(f)$ is a simplex whose vertices $\alpha \in V(f)$ are monomial squares, that is,
$V(f) \subseteq \text{MoSq}(f)$.
\end{definition}

For all $\beta \in \Delta(f)$ there exist $\lambda_\alpha^{(\beta)} \geq 0$, $\alpha
\in V(f)$, forming the unique convex combination~\eqref{eqn:barycoord}.
The vertex set $V(f)$ of the simplex is referred to as a \emph{cover} of the inner term $\beta$.
We will say that $\beta$ is \emph{covered} by $V(f)$.

\begin{figure}
\centering
\includegraphics{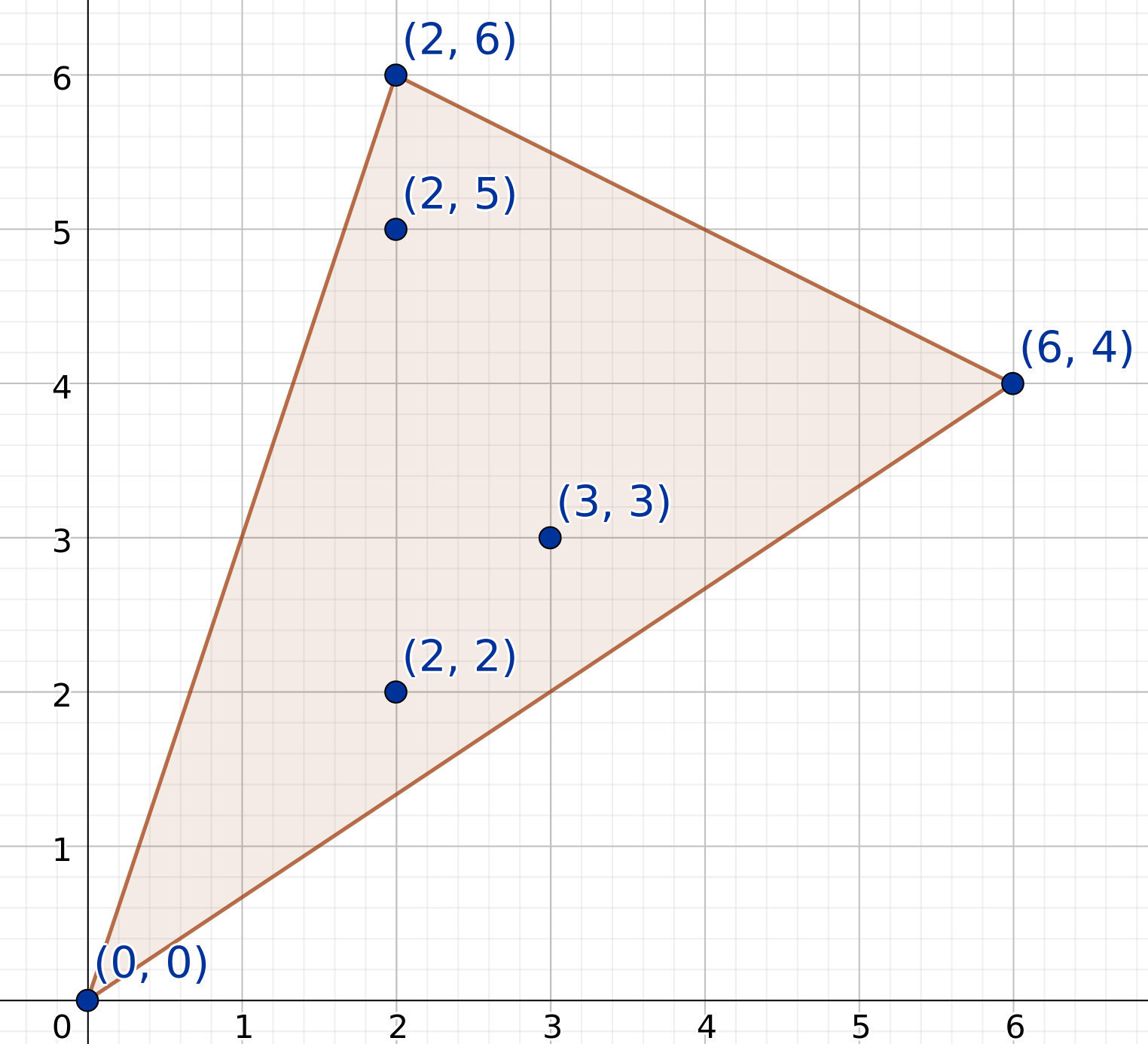}
\smallskip
\caption{Exponents and the Newton polytope of the polynomial $f(x_1,x_2) = x_1^6x_2^4 + x_1^3x_2^3 + x_1^2x_2^6 + x_1^2x_2^5 + x_1^2x_2^2 + 1$.}
\label{fig:exponents}
\end{figure}

Figure~\ref{fig:exponents} shows points corresponding to the exponents of the polynomial
$f(x_1,x_2) = x_1^6x_2^4 + x_1^3x_2^3 + x_1^2x_2^6 + x_1^2x_2^5 + x_1^2x_2^2 + 1$.
The coloured region represents the Newton polytope.
Since the vertices $(6,4)$, $(2,6)$ and $(0,0)$ are even, correspond to terms with positive
coefficients and are affinely independent, $f(x_1,x_2)$ is an ST-polynomial.
Exponents of the inner terms $(2,5)$, $(3,3)$ and $(2,2)$ can be expressed as unique convex
combinations of the vertices.

It is possible to split an ST-polynomial into circuit polynomials by taking the same Newton
polytope for each inner term and splitting the coefficients of the monomial squares among the
circuit polynomials.
The existence of such a decomposition is a proof of nonnegativity.

\begin{theorem}\label{thm:stnonneg} (\cite[Theorem 3.1]{Lower})
An ST-polynomial $f(\mathbf x)$ is a SONC polynomial if for every $(\beta, \alpha) \in \Delta(f)
\times V(f)$ there exist $a_{\beta,\alpha} \geq 0$ such that
\begin{gather*}
\rvert f_\beta\rvert \leq \prod_{\alpha \in nz(\beta)}\left(\frac{a_{\beta,\alpha}}{\lambda_\alpha^{(\beta)}}\right)^{\lambda_\alpha^{\beta}},\\
f_{\alpha} \geq \sum_{\beta \in \Delta(f)} a_{\beta,\alpha}.
\end{gather*}
These $a_{\beta,\alpha}$ are the weights in the following SONC decomposition:
$$f(\mathbf x) = \sum_{\beta \in \Delta(f)} \left(\sum_{\alpha \in nz(\beta)} a_{\beta,\alpha}\mathbf x^{\alpha} + f_\beta \mathbf x^\beta\right),$$
where $nz(\beta)$ denotes all exponents $\alpha\in V(f)$ that correspond to nonzero barycentric coordinates $\lambda_\alpha^{(\beta)}$.
\end{theorem}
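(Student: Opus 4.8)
The plan is to exhibit an explicit SONC decomposition of $f$ by attaching one nonnegative circuit polynomial to each inner term. Concretely, for every $\beta \in \Delta(f)$ I would set
\begin{equation*}
p_\beta(\mathbf{x}) = \sum_{\alpha \in nz(\beta)} a_{\beta,\alpha} \mathbf{x}^{\alpha} + f_\beta \mathbf{x}^{\beta},
\end{equation*}
and then establish three claims: (i) each $p_\beta$ is a genuine circuit polynomial; (ii) each $p_\beta$ is nonnegative; and (iii) summing the $p_\beta$ recovers $f$ up to nonnegative monomial squares. Together these show that $f$ is a sum of nonnegative circuit polynomials, i.e.\ a SONC polynomial.

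For (i), I would check that the exponent set of $p_\beta$ has the required simplex structure. Since $f$ is an ST-polynomial, $V(f)$ consists of affinely independent monomial squares, so its subset $nz(\beta)$ is again affinely independent and contained in $\text{MoSq}(f)$. By the definition of $nz(\beta)$, the barycentric representation~\eqref{eqn:barycoord} of $\beta$ uses exactly the vertices in $nz(\beta)$, all with strictly positive weights $\lambda_\alpha^{(\beta)}$, so $\beta$ lies in the relative interior of $\operatorname{conv}(nz(\beta))$. Thus $\text{New}(p_\beta)$ is a simplex with vertex set $nz(\beta) \subseteq \text{MoSq}(f)$ and unique inner term $\beta$, matching~\eqref{eq:circuitpoly}. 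The coefficients $a_{\beta,\alpha}$ are nonnegative by hypothesis, and the first inequality forces them to be strictly positive whenever $f_\beta \neq 0$ (otherwise the right-hand product would vanish), so no vertex degenerates.

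For (ii), the key observation is that the barycentric coordinates of $\beta$ inside $p_\beta$ are precisely the $\lambda_\alpha^{(\beta)}$ of~\eqref{eqn:barycoord}, so by~\eqref{eqn:circuitnumb} the circuit number of $p_\beta$ equals $\theta_{p_\beta}(\beta) = \prod_{\alpha \in nz(\beta)} (a_{\beta,\alpha}/\lambda_\alpha^{(\beta)})^{\lambda_\alpha^{(\beta)}}$, which is exactly the right-hand side of the first hypothesis. Hence the assumption reads $\lvert f_\beta\rvert \leq \theta_{p_\beta}(\beta)$, and I would invoke Theorem~\ref{the:circuitnonneg}: if $\beta \notin (2\mathbb{N})^n$ this is directly the nonnegativity criterion, while if $\beta \in (2\mathbb{N})^n$ it gives $f_\beta \geq -\lvert f_\beta\rvert \geq -\theta_{p_\beta}(\beta)$, so $p_\beta \geq 0$ in either case.

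Finally, for (iii), I would collect coefficients in $\sum_{\beta \in \Delta(f)} p_\beta$. The inner terms reproduce $\sum_{\beta \in \Delta(f)} f_\beta \mathbf{x}^{\beta}$ exactly, while the coefficient of each vertex monomial $\mathbf{x}^{\alpha}$ equals $\sum_{\beta \in \Delta(f)} a_{\beta,\alpha}$ (setting $a_{\beta,\alpha} = 0$ when $\alpha \notin nz(\beta)$), which by the second hypothesis is at most $f_\alpha$. Therefore
\begin{equation*}
f(\mathbf{x}) - \sum_{\beta \in \Delta(f)} p_\beta(\mathbf{x}) = \sum_{\alpha \in V(f)} \Bigl( f_\alpha - \sum_{\beta \in \Delta(f)} a_{\beta,\alpha} \Bigr) \mathbf{x}^{\alpha}
\end{equation*}
is a sum of monomial squares with nonnegative coefficients, each trivially nonnegative. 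Adding these to the circuit polynomials $p_\beta$ expresses $f$ as a SONC polynomial; the decomposition displayed in the statement is the special case where the second inequality is tight. The step that needs the most care is exactly this bookkeeping — accounting for the leftover vertex mass as nonnegative monomial squares rather than discarding it — together with the verification in (i) that $nz(\beta)$ yields a nondegenerate simplex so that $p_\beta$ qualifies as a circuit polynomial in the first place.
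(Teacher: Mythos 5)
Your proof is correct and takes essentially the same route as the source: the paper cites this result from \cite{Lower} without reproducing the argument, but the sketch preceding the theorem (``splitting the coefficients of the monomial squares among the circuit polynomials'') and the displayed decomposition are precisely your construction of one circuit polynomial $p_\beta$ per inner term, with nonnegativity checked against the circuit number via Theorem~\ref{the:circuitnonneg}. Your added care in treating the leftover vertex coefficients $f_\alpha - \sum_{\beta} a_{\beta,\alpha}$ as nonnegative monomial squares when the second inequality is strict is a worthwhile precision that the displayed decomposition, which tacitly assumes tightness, glosses over.
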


%%%%%%%%%%%%%%%%%%%%%%%%%%%%%%%%%%%%%%%%%%%%%%%%%%%%%%%%%%%%%%%%%%%

\section{Polynomial Optimization via SONC}\label{sec:constrained}

In this section we restate known results on SONC polynomials and optimization methods based on SONC
relaxations.
Polynomial optimization problems can be stated as nonnegativity problems, since one can write the
problem of minimizing $f(\mathbf x)$ equivalently as
\begin{equation}\label{eq:optnonneg}\sup \{ \gamma \in \mathbb{R}: \hspace{2mm} f(\mathbf x)-\gamma\geq 0\}.\end{equation}
This problem, however, is as hard as the original problem.
Requiring instead that some certificate of nonnegativity exists for $f(\mathbf x) - \gamma$
results in a relaxation of the original problem.
If a SONC decomposition is used as a nonnegativity certificate, then the relaxation is
\begin{equation}\label{eq:optnonnegsonc}
\sup \{\gamma \in \mathbb{R}: f(\mathbf{x}) - \gamma \text{ is SONC}\}.
\end{equation}

%Using geometric programming, one can obtain a lower found on the optimal value of problem~\eqref{eqn:optproblem} by finding a
%SONC decomposition of the Lagrangian of problem~\eqref{eqn:optproblem}.

\subsection{Lower Bounds for Unconstrained Optimization over an ST-Polynomial}\label{sec:GPunconst}

This section shows how to write the problem of finding the optimal SONC decomposition of an
unconstrained polynomial optimization problem as a convex optimization problem.
We begin by recalling the notion of geometric programs (GPs).

\begin{definition}~\cite{BoydVan}
A \textit{monomial} is defined as a function $q:\mathbb{R}^{n}\rightarrow \mathbb{R}$ of the form \mbox{$q(\mathbf{x})= c x_1^{a_1} \cdot \ldots \cdot x_n^{a_n}$} with $c>0$.
A \textit{posynomial} is defined as a sum of monomials.
A \textit{geometric program (GP)} is an optimization problem of the form
\begin{mini}
{}{p_0(\mathbf{x})}
{\label{eqn:generalgp}}{}
\addConstraint{p_i(\mathbf{x})}{\leq 1, \quad }{\text{ for }i=1,\ldots,r}
\addConstraint{q_j(\mathbf{x})}{= 1, \quad }{\text{ for }j=1,\ldots,s},
\end{mini}
where $p_0,\ldots,p_r$ are posynomials and $q_1,\ldots,q_s$ are monomials.
\end{definition}
Applying a logarithmic transformation to a GP results in a convex optimization
problem~\cite{boyd2007tutorial} which is equivalent to the GP; in particular, applying a reverse
transformation to an optimal solution of the transformed problem gives an optimal solution of the
original GP.

Let $f$ be an ST-polynomial and assume that $\Delta(f) \cap \text{MoSq}(f) = \emptyset$.
This can be achieved by disregarding all monomial squares that are not vertices of New$(f)$.
This change preserves the validity of a lower bound on the optimal solution $f^*$ since monomial
squares are always nonnegative~\cite{GP}.
The following theorem formulates a GP based on the nonnegativity conditions from
Theorem~\ref{thm:stnonneg}.

\begin{theorem}\cite[Corollary 2.7]{GP}\label{Cor:GPunconstrained}
Let $f$ be an ST-polynomial and introduce variables $a_{\beta,\alpha} > 0$ for every $\beta \in
\Delta(f)$ and $\alpha \in V(f)$.
Then the SONC lower bound on $f$ is given by $f_{\text{SONC}} = f_0 - \gamma$,
where $\gamma$ is the solution of the following GP:
\begin{mini!}
{}{\sum_{\substack{\beta \in \Delta(f)\\ \lambda_{0}^{(\beta)} \neq 0}} \lambda_{0}^{(\beta)} \cdot \vert f_{\beta} \vert ^{\dfrac{1}{\lambda_{0}^{(\beta)}}} \cdot \prod_{\substack{j \in nz(\beta) \\ \alpha\neq 0}} \left(\dfrac{\lambda_{\alpha}^{(\beta)}}{a_{\beta,\alpha}} \right)^{\dfrac{\lambda_{\alpha}^{(\beta)}}{\lambda_{0}^{(\beta)}}}}
{\label{eqn:GPunconstrained}}{}
\addConstraint{\sum_{\beta \in \Delta(f)} \dfrac{a_{\beta,\alpha}}{f_{\alpha}}}{\leq 1 \text{ for all } \alpha \in V(f) \setminus 0}
\addConstraint{\vert f_{\beta} \vert \prod_{\alpha \in nz(\beta)} \left( \dfrac{\lambda_{\alpha}^{(\beta)}}{a_{\beta,\alpha}} \right)^{\lambda_{\alpha}^{(\beta)}}}{\leq 1 \text{ for all } \beta \in \Delta(f) \text{ with } \lambda_{0}^{(\beta)}=0}.
\end{mini!}
\end{theorem}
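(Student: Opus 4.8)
The plan is to translate the definition of the SONC lower bound directly into the weight conditions of Theorem~\ref{thm:stnonneg} and then eliminate the weights associated with the origin to recover the stated geometric program. By definition, $f_{\text{SONC}}$ is the largest constant $c$ for which $f(\mathbf{x}) - c$ is SONC. Since the origin $0 \in V(f)$ is a monomial square vertex with coefficient $f_0 \geq 0$, subtracting $c$ changes only this single coefficient, replacing $f_0$ by $f_0 - c$ while leaving the support, the Newton polytope, every barycentric coordinate $\lambda_\alpha^{(\beta)}$, and all other coefficients untouched. Thus $f - c$ is again an ST-polynomial with the same combinatorial data, and I would apply Theorem~\ref{thm:stnonneg} to it to obtain a sufficient nonnegativity condition in terms of weights $a_{\beta,\alpha} \geq 0$.

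First I would write out the two families of conditions from Theorem~\ref{thm:stnonneg} for $f - c$. The circuit-number inequalities $|f_\beta| \leq \prod_{\alpha \in nz(\beta)} (a_{\beta,\alpha}/\lambda_\alpha^{(\beta)})^{\lambda_\alpha^{(\beta)}}$ are unaffected by $c$, since $0$ is a vertex and never an inner term. The distribution inequalities $f_\alpha \geq \sum_{\beta \in \Delta(f)} a_{\beta,\alpha}$ are likewise unchanged for every vertex $\alpha \neq 0$, but at the origin the condition becomes $f_0 - c \geq \sum_{\beta \in \Delta(f)} a_{\beta,0}$. Only weights $a_{\beta,0}$ with $0 \in nz(\beta)$, that is $\lambda_0^{(\beta)} \neq 0$, can be positive, so this reads $c \leq f_0 - \sum_{\beta:\,\lambda_0^{(\beta)}\neq 0} a_{\beta,0}$. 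Maximising $c$ is therefore equivalent to minimising $\gamma := \sum_{\beta:\,\lambda_0^{(\beta)}\neq 0} a_{\beta,0}$ over all feasible weights, which yields $f_{\text{SONC}} = f_0 - \gamma$ once the minimum is attained.

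The key reduction is to eliminate the origin weights $a_{\beta,0}$ from the objective. For each $\beta$ with $\lambda_0^{(\beta)} \neq 0$ I would isolate the $\alpha = 0$ factor in its circuit-number inequality and solve for $a_{\beta,0}$, obtaining
\[
a_{\beta,0} \ \geq\ \lambda_0^{(\beta)} \, |f_\beta|^{1/\lambda_0^{(\beta)}} \prod_{\substack{\alpha \in nz(\beta)\\ \alpha \neq 0}} \left(\frac{\lambda_\alpha^{(\beta)}}{a_{\beta,\alpha}}\right)^{\lambda_\alpha^{(\beta)}/\lambda_0^{(\beta)}}.
\]
Since $\gamma$ is increasing in each $a_{\beta,0}$ and these variables enter no constraint other than the corresponding circuit-number inequality (having been folded into $\gamma$ in place of the vertex-$0$ distribution condition), at an optimum each such inequality is tight, and substituting the right-hand side for $a_{\beta,0}$ produces exactly the objective~\eqref{eqn:GPunconstrained}. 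The inequalities surviving this elimination are precisely the GP constraints: the distribution conditions at vertices $\alpha \neq 0$ rearrange to $\sum_{\beta} a_{\beta,\alpha}/f_\alpha \leq 1$, and the circuit-number inequalities for those $\beta$ with $\lambda_0^{(\beta)} = 0$ (whose covers exclude the origin, so they cannot define an $a_{\beta,0}$) rearrange to $|f_\beta|\prod_{\alpha\in nz(\beta)}(\lambda_\alpha^{(\beta)}/a_{\beta,\alpha})^{\lambda_\alpha^{(\beta)}} \leq 1$. Finally I would verify that the resulting problem is a genuine GP: the objective is a posynomial in the $a_{\beta,\alpha}$, using $f_\alpha>0$ and $|f_\beta|>0$; each distribution constraint is a posynomial bounded by $1$; and each surviving circuit constraint is a single monomial bounded by $1$.

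I expect the main obstacle to be the bookkeeping around the two regimes $\lambda_0^{(\beta)} \neq 0$ and $\lambda_0^{(\beta)} = 0$, and in particular justifying rigorously that the circuit-number inequalities for origin-covered inner terms are tight at optimality, so that the substitution is lossless. This rests on a monotonicity argument: $\gamma$ is strictly increasing in each $a_{\beta,0}$, these variables enter only the corresponding circuit inequality, and that inequality is relaxed as $a_{\beta,0}$ grows, so no optimal solution can leave slack there. One must also confirm that fixing $a_{\beta,\alpha} = 0$ for $\alpha \notin nz(\beta)$ is without loss of generality, since such weights only tighten the distribution constraints without helping any circuit inequality, and that the strict positivity $a_{\beta,\alpha} > 0$ needed for the logarithmic transformation can be assumed by an approximation argument wherever an optimal weight would otherwise vanish.
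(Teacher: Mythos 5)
Your derivation follows the route of the source this theorem is quoted from: the paper itself gives no proof (it simply cites \cite[Corollary~2.7]{GP}), and the intended argument is exactly what you carry out --- apply Theorem~\ref{thm:stnonneg} to $f - c$, observe that subtracting $c$ only perturbs the constant coefficient, fold the origin's distribution condition $f_0 - c \geq \sum_{\beta} a_{\beta,0}$ into the objective, and eliminate the weights $a_{\beta,0}$ by making their circuit-number inequalities tight. Your algebra in the elimination step is correct, the surviving constraints are exactly those of~\eqref{eqn:GPunconstrained}, and your handling of the two regimes $\lambda_0^{(\beta)} \neq 0$ versus $\lambda_0^{(\beta)} = 0$, the normalization by $f_\alpha > 0$, and the without-loss-of-generality choice $a_{\beta,\alpha} = 0$ for $\alpha \notin nz(\beta)$ are all sound.

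There is, however, a gap in one direction of your claimed equivalence. Theorem~\ref{thm:stnonneg}, as stated in the paper, is only a \emph{sufficient} condition for being SONC (``is a SONC polynomial if \ldots''), so your argument establishes $f_0 - \gamma \leq \sup\{c : f - c \text{ is SONC}\}$; it does not by itself give the reverse inequality, namely that every SONC certificate for $f - c$ can be converted into feasible weights $a_{\beta,\alpha}$. When you write ``Maximising $c$ is therefore equivalent to minimising $\gamma$,'' the ``therefore'' silently invokes this converse. The converse is true for ST-polynomials, but it requires an additional ingredient you do not supply: one must know that a SONC decomposition of $f - c$ can be taken with every circuit supported inside $\mathcal{A}(f) \cup \{0\}$ (a no-cancellation, or support-containment, property of SONC decompositions), after which affine independence of $V(f)$ forces each circuit with inner term $\beta$ to use the cover $nz(\beta)$ with the barycentric coordinates $\lambda^{(\beta)}$, and aggregating circuits by inner term yields the required weights. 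Without that step, what you have actually proven is that the GP value furnishes a valid SONC-based lower bound, not that it equals the SONC bound~\eqref{eq:optnonnegsonc}. If one reads ``the SONC lower bound'' as the optimal bound obtainable from decompositions of the form in Theorem~\ref{thm:stnonneg} --- which is how the paper uses the result --- your proof is complete; under the definition~\eqref{eq:optnonnegsonc}, the converse direction still needs to be argued.
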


\subsection{Lower Bounds for Constrained Optimization with an ST-Polynomial Lagrangian Function}\label{sec:GP}

Dressler et al.~\cite{GP} extend the method described above to the constrained case by utilizing
Lagrangian relaxations.
Consider an optimization problem of the form \eqref{eqn:optproblem}.
Then the Lagrangian function of the problem has the form
\begin{equation}
L\left(\mathbf{x},\mathbf{\mu}\right) = f\left(\mathbf{x}\right) - \sum_{i=1}^{m} \mu_{i} g_{i} \left(\mathbf{x}\right),
\end{equation}
where $\mu_{i} \geq 0$, $i=1,\ldots,m$ are Lagrangian multipliers.
Let $\mu_0:=1$ and $g_0:=-f$.
The Lagrangian then becomes
\begin{equation}\label{eqn:lagrangianUsed}
L(\textbf{x},\mathbf{\mu}) = - \sum_{i=0}^{m} \mu_{i} g_{i}(\mathbf{x}).
\end{equation}
%
%We assume that $L(\mathbf{x},\mathbf{\mu})$ is an ST-polynomial as given in Definition~\ref{def:STpoly}.
%
%Note that this is not necessarily the case for all $\mathbf{\mu}$ since term cancellation might occur for some values of $\mathbf{\mu}$. Term cancellation might destroy the simplex structure of the ST-polynomial.

The coefficients of the polynomial~\eqref{eqn:lagrangianUsed} depend on $\mu$, and term
cancellation may lead to some of the monomials vanishing for certain values of $\mu$.
Therefore, the definitions of exponent sets need to account for this.
Thus, the support $\mathcal{A}(L)$ is defined as the union of supports of individual polynomials
defining the objective and the constraints: $\mathcal{A}(L) = \cup_{i=0}^m \mathcal{A}(g_i)$.
The definitions of the vertices of the Newton polytope and the exponents corresponding to inner
terms are analogous: $V(L) = \cup_{i=0}^m V(g_i)$ and $\Delta(L) = \cup_{i=0}^m \Delta(g_i)$.

We assume that $\Delta(L)$ does not contain exponents corresponding to monomial square terms.
If this is not the case, then, since monomial square terms are always nonnegative, we can disregard
monomial square inner terms and still obtain a valid lower bound.

Further, we assume that $L$ is an ST-polynomial, and write it in the form
\begin{equation}\label{eqn:lagrangianST}
L(\mathbf{x},\mathbf{\mu}) = \sum_{\alpha \in V(L)} L(\mathbf{\mu})_{\alpha} \mathbf{x}^{\alpha} + \sum_{\beta \in \Delta(L)} L(\mathbf{\mu})_{\beta}\mathbf{x}^{\beta}.
\end{equation}

%The key difference from the unconstrained case is the presence of the new variables $\mu$.
%
For a fixed $\mu$, the problem of optimizing the Lagrangian reduces to a similar GP that is solved
in the unconstrained case.
Let $\gamma(\mu)$ denote the optimal value of problem~\eqref{eqn:GPunconstrained} for a given
$\mu$.
The goal is to find the best possible lower bound $\gamma^*$ over all nonnegative $\mu$,
that is, \mbox{$\gamma^* = \sup \{ \gamma(\mu) : \mu \in \mathbb{R}^n_+\}$}.
However, directly writing this in the form~\eqref{eqn:GPunconstrained}
does not produce a GP, and additional relaxation steps are necessary in order to obtain a
GP~\cite{GP}.

As before, let $\lambda_\alpha^{(\beta)}$ denote the barycentric coordinates for each $\beta \in
\Delta(L)$ with respect to the vertices $\alpha \in V(L)$.
For each $\beta \in \Delta(L)$, we introduce new variables $a_{\beta,\alpha} > 0$ for each $\alpha \in V(L)$,
and $b_{\beta} \geq 0$.
For each constraint $g_{i}(\mathbf{x})$, \mbox{$i = 1,\ldots,m$}, we denote by $g_{i,\alpha}$
the coefficient corresponding to the monomial $\mathbf{x}^{\alpha}$.
Then optimizing the Lagrangian over $\mu$ and $\mathbf{x}$ is equivalent to solving the following
optimization problem with variables $\mathbf{\mu}$, $a_{\beta}$ and $b_{\beta}$:
\begin{mini!}
{\substack{\mathbf{\mu}\geq 0, \\a_{\beta} > 0, b_{\beta} \geq 0}}{\sum_{i=1}^{m} \mu_{i} g_{i,0} + \sum_{\substack{\beta \in \Delta(L)\\ \lambda_{0}^{(\beta)} \neq 0}} \lambda_{0}^{(\beta)} \cdot b_{\beta}^{\dfrac{1}{\lambda_{0}^{(\beta)}}} \cdot \prod_{\substack{\alpha \in nz(\beta) \\ \alpha\neq 0}} \left(\dfrac{\lambda_{\alpha}^{(\beta)}}{a_{\beta,\alpha}} \right)^{\dfrac{\lambda_{\alpha}^{(\beta)}}{\lambda_{0}^{(\beta)}}}\label{eqn:objectiveGPone}}
{\label{eqn:GPproblem}}{}
\addConstraint{\sum_{\beta \in \Delta(L)} a_{\beta,\alpha}}{\leq L(\mathbf{\mu})_{\alpha}\text{ for all } \alpha\neq 0}
\addConstraint{\prod_{\alpha \in nz(\beta)} \left( \dfrac{a_{\beta,\alpha}}{\lambda_{\alpha}^{(\beta)}} \right)^{\lambda_{\alpha}^{(\beta)}}}{\geq b_{\beta}\text{ for all } \beta \in \Delta(L) \text{ with } \lambda_{0}^{(\beta)}=0}
\addConstraint{\vert L(\mathbf{\mu})_{\beta} \vert }{\leq b_{\beta}\text{ for all } \beta \in \Delta(L) \text{ with } \lambda_{0}^{(\beta)}\neq 0. \label{eqn:constraintGPthree}}
\end{mini!}

As before, $nz(\beta)$ denotes all exponents $\alpha \in V(L)$ with non-zero barycentric
coordinates in the convex combination that forms $\beta$, i.e. $nz(\beta) = \{\alpha\in V(L) ~\rvert~
\lambda_\alpha^{(\beta)}\neq 0\}$.
This optimization problem was stated in \cite{Lower} and the following statements were shown,
see~\cite[Theorem 5.1 and 5.2]{Lower}.

\begin{theorem}\label{the:GPone}
Let $L$ be an ST-polynomial of the form \eqref{eqn:lagrangianST}.
Assume that every coefficient $L(\mathbf{\mu})_{\alpha}$ consists of only one summand for each
$\alpha \in V(L)$ and is strictly positive.
Moreover, assume that for each $\beta \in \Delta(L)$ the coefficient $L(\mathbf{\mu})_{\beta}$ has
only positive terms and $g_{i,0} \geq 0$ for all $i=1,\ldots,m$.
Then the problem~\eqref{eqn:GPproblem} is a GP for $\mathbf{\mu}>0$.

The optimization problem~\eqref{eqn:GPproblem} yields a lower bound for
problem~\eqref{eqn:optproblem}.
More precisely, if $\gamma^*$ is the solution of the GP~\eqref{eqn:GPproblem} and $f^{*}$ is the
solution of the original problem~\eqref{eqn:optproblem}, then we have 
$$f_{0} - \gamma^* \leq f^{*}.$$
%where as before $f_{0}$ is the constant term of the objective $f$ of~\eqref{eqn:optproblem}.
\end{theorem}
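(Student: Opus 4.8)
The plan is to prove the two assertions in turn: first that, under the three structural hypotheses and for $\mathbf\mu>0$, problem~\eqref{eqn:GPproblem} really is a geometric program in the sense of~\eqref{eqn:generalgp}; and second that its optimal value $\gamma^*$ certifies $f_0-\gamma^*\le f^*$, by combining Lagrangian weak duality with the unconstrained result of Theorem~\ref{Cor:GPunconstrained}.

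For the GP assertion I would verify, term by term, that the objective and every constraint reduce to canonical posynomial form. In the objective~\eqref{eqn:objectiveGPone}, the term $\sum_{i=1}^m\mu_i g_{i,0}$ is a sum of monomials in $\mathbf\mu$ whose coefficients $g_{i,0}$ are nonnegative by hypothesis, while each remaining summand is a monomial in $b_\beta$ and the $a_{\beta,\alpha}$, $\alpha\in nz(\beta)$, with a positive coefficient; hence the objective is a posynomial. The first constraint is rewritten as $\sum_{\beta}a_{\beta,\alpha}/L(\mathbf\mu)_\alpha\le 1$, and here the assumption that each $L(\mathbf\mu)_\alpha$ consists of a single strictly positive summand is essential: it makes $1/L(\mathbf\mu)_\alpha$ a monomial, so the left-hand side is a posynomial rather than the reciprocal of one. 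The second constraint rearranges to the single monomial $b_\beta\prod_{\alpha}(\lambda_\alpha^{(\beta)}/a_{\beta,\alpha})^{\lambda_\alpha^{(\beta)}}\le 1$. Finally, constraint~\eqref{eqn:constraintGPthree} becomes $L(\mathbf\mu)_\beta/b_\beta\le 1$ once we use that $L(\mathbf\mu)_\beta$ has only positive terms, so that $\lvert L(\mathbf\mu)_\beta\rvert=L(\mathbf\mu)_\beta$ for $\mathbf\mu>0$ and the modulus disappears, leaving a posynomial over the monomial $b_\beta$. This is precisely the point at which dropping the absolute value — impossible for a general sign pattern — is licensed by the hypotheses.

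For the lower bound I would argue by weak duality. Fixing any feasible $\mathbf x$ of~\eqref{eqn:optproblem} and any $\mathbf\mu\ge 0$, the constraints $g_i(\mathbf x)\ge 0$ force the Lagrangian~\eqref{eqn:lagrangianUsed} to satisfy $L(\mathbf x,\mathbf\mu)\le f(\mathbf x)$, so $\inf_{\mathbf x}L(\mathbf x,\mathbf\mu)\le f^*$. It then suffices to show that each feasible point of~\eqref{eqn:GPproblem} yields a SONC lower bound $f_0-\Gamma$ on $L(\cdot,\mathbf\mu)$, where $\Gamma$ is the objective value. The key identity is that the constant coefficient of the Lagrangian equals $L(\mathbf\mu)_0=f_0-\sum_{i=1}^m\mu_i g_{i,0}$, so that $f_0-\Gamma=L(\mathbf\mu)_0-P$ with $P$ the posynomial part of~\eqref{eqn:objectiveGPone}. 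Reading the auxiliary variables $b_\beta$ as stand-ins for the coefficient magnitudes $\lvert L(\mathbf\mu)_\beta\rvert$, the constraints of~\eqref{eqn:GPproblem} are exactly the nonnegativity conditions of Theorem~\ref{thm:stnonneg} for the shifted ST-polynomial $L(\mathbf x,\mathbf\mu)-(f_0-\Gamma)$: the first constraint distributes the monomial-square coefficients among the covers, and the second constraint together with~\eqref{eqn:constraintGPthree} reproduce the circuit-number inequalities. Hence $L(\mathbf x,\mathbf\mu)\ge f_0-\Gamma$ for all $\mathbf x$, and weak duality gives $f_0-\Gamma\le f^*$.

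The final step is to check that introducing the $b_\beta$ does not corrupt this inequality and that minimizing $\Gamma$ yields the sharpest bound. Since constraint~\eqref{eqn:constraintGPthree} only imposes $b_\beta\ge\lvert L(\mathbf\mu)_\beta\rvert$ and $P$ is monotonically increasing in each $b_\beta$, replacing the true magnitudes by the larger $b_\beta$ can only increase $\Gamma$, hence decrease $f_0-\Gamma$, so the inequality $f_0-\Gamma\le f^*$ is preserved; at $b_\beta=\lvert L(\mathbf\mu)_\beta\rvert$ the value $P$ coincides with $\gamma(\mathbf\mu)$ from~\eqref{eqn:GPunconstrained}, recovering the tightest Lagrangian SONC bound for that $\mathbf\mu$. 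As $f_0-\Gamma\le f^*$ holds at every feasible point, it holds at the minimizer, giving $f_0-\gamma^*\le f^*$. I expect the main obstacle to be the bookkeeping that matches the two cases $\lambda_0^{(\beta)}=0$ and $\lambda_0^{(\beta)}\ne 0$ — corresponding to whether an inner term's cover uses the origin — to the correct constraints and to the presence or absence of $b_\beta$ in the objective, since it is exactly this case split that forces the relaxation and governs why the resulting bound stays valid.
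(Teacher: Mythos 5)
The paper does not actually prove Theorem~\ref{the:GPone}: it defers entirely to \cite[Theorems~5.1 and~5.2]{Lower}. So your proposal can only be measured against the argument in that source, which it does reconstruct in outline. Your first part (the GP verification) is complete and correct: the objective is a posynomial in $(\mathbf{\mu},a,b)$ because $g_{i,0}\geq 0$, the first constraint becomes posynomial-over-monomial precisely because each $L(\mathbf{\mu})_{\alpha}$ is a single strictly positive summand, the second constraint is a monomial constraint, and the positivity of the terms of $L(\mathbf{\mu})_{\beta}$ is exactly what lets you drop the absolute value in \eqref{eqn:constraintGPthree}. Your second part has the right skeleton: weak duality reduces everything to showing that a feasible point of \eqref{eqn:GPproblem} with objective value $\Gamma$ certifies $L(\mathbf{x},\mathbf{\mu})\geq f_0-\Gamma$ via Theorem~\ref{thm:stnonneg}, and the identity $L(\mathbf{\mu})_0=f_0-\sum_{i=1}^m \mu_i g_{i,0}$ is indeed what makes $f_0-\Gamma$ (rather than $L(\mathbf{\mu})_0-\Gamma$) the certified bound.

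The weakness is that the step you dismiss as ``bookkeeping'' is the crux of the second part, and your proof asserts it rather than proves it. The claim that the constraints of \eqref{eqn:GPproblem} ``are exactly the nonnegativity conditions of Theorem~\ref{thm:stnonneg}'' is not true at face value for the inner terms with $\lambda_0^{(\beta)}\neq 0$: for those $\beta$ no constraint of \eqref{eqn:GPproblem} contains a circuit number at all, and Theorem~\ref{thm:stnonneg} requires a weight $a_{\beta,0}$ at the vertex $0$ that is not even a variable of the GP. What closes this gap is the identification of the $\beta$-summand of the objective \eqref{eqn:objectiveGPone} with that missing weight:
\begin{equation*}
a_{\beta,0} := \lambda_0^{(\beta)}\, b_\beta^{1/\lambda_0^{(\beta)}} \prod_{\substack{\alpha \in nz(\beta)\\ \alpha \neq 0}} \left(\frac{\lambda_\alpha^{(\beta)}}{a_{\beta,\alpha}}\right)^{\lambda_\alpha^{(\beta)}/\lambda_0^{(\beta)}}
\quad\Longrightarrow\quad
\left(\frac{a_{\beta,0}}{\lambda_0^{(\beta)}}\right)^{\lambda_0^{(\beta)}} \prod_{\substack{\alpha \in nz(\beta)\\ \alpha \neq 0}} \left(\frac{a_{\beta,\alpha}}{\lambda_\alpha^{(\beta)}}\right)^{\lambda_\alpha^{(\beta)}} = b_\beta \geq \vert L(\mathbf{\mu})_{\beta}\vert,
\end{equation*}
so the circuit-number condition holds for such $\beta$, and the vertex condition at $0$ holds with equality because the shifted polynomial's constant coefficient $P=\Gamma-\sum_{i}\mu_i g_{i,0}$ is by construction $\sum_{\beta}a_{\beta,0}$. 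This computation is the sole reason the objective carries the exponents $1/\lambda_0^{(\beta)}$ and $\lambda_\alpha^{(\beta)}/\lambda_0^{(\beta)}$; a proof that omits it has not explained the formulation it is analyzing. A second, smaller issue: for $\beta$ with $\lambda_0^{(\beta)}=0$ your chain $\vert L(\mathbf{\mu})_{\beta}\vert\leq b_\beta\leq{}$circuit number uses a constraint that \eqref{eqn:constraintGPthree} as printed does not supply, since it is restricted to $\lambda_0^{(\beta)}\neq 0$. This is almost certainly a typo in the paper (compare \eqref{eqn:GPproblemtwo}, where the analogous constraints range over all $\beta\in\Delta(L)$), but you should state explicitly that you are using the corrected constraint; with the literal printed formulation the lower-bound claim would genuinely fail for those terms.
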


%Note that the first assumption is equivalent to each $\alpha \in V(L)$ occurring in at most one
%constraint $g_i$ and having a negative coefficient $g_{i,\alpha}$, $i=1,\ldots,m$. 

However, Theorem~\ref{the:GPone} places considerable restrictions on the coefficients.
Hence, a relaxation of this problem was introduced in~\cite{GP}.
The key step is to redefine the objective~\eqref{eqn:objectiveGPone} and the
constraint~\eqref{eqn:constraintGPthree} in order to obtain a GP under weaker assumptions.

First, we split up the coefficients $L(\mathbf{\mu})_{\beta}$ into their positive and their negative
parts, defining $L(\mathbf{\mu})_{\beta} = L(\mathbf{\mu})_{\beta}^{+} - L(\mathbf{\mu})_{\beta}^{-}$
with
\begin{equation*}
L(\mathbf{\mu})_{\beta}^{+} = \sum_{\substack{i=1,\ldots,m \\ g_{i,\beta}>0}} \mu_{i} g_{i,\beta} \hspace{5mm}\text{ and } \hspace{5mm}
L(\mathbf{\mu})_{\beta}^{-} = - \sum_{\substack{i=1,\ldots,m \\ g_{i,\beta}<0}} \mu_{i} g_{i,\beta},
\end{equation*}
where $g_{i,\beta}$ denotes the coefficient corresponding to $\mathbf{x}^\beta$ of $g_i$.
Using this decomposition, constraint~\eqref{eqn:constraintGPthree} can be changed to
\mbox{$\max \{ L(\mathbf{\mu})_{\beta}^{+}, L(\mathbf{\mu})_{\beta}^{-}\}\leq b_{\beta}$}.
Further, in the objective function, instead of all $g_{i,0}$ for $i=1,\ldots,m$, we only consider
those coefficients of the monomial $\mathbf{x}^{0}$ that are positive in the corresponding $g_i$
and thus negative in $L(\mathbf x, \mu)$, resulting in $g_{i,0}^{+}=\max \{g_{i,0},0\}$.
This produces the following optimization problem:
\begin{mini!}
{\substack{\mathbf{\mu}\geq 0, \\a_{\beta} > 0, b_{\beta} \geq 0}}{\sum_{i=1}^{m} \mu_{i} g_{i,0}^{+} + \sum_{\substack{\beta \in \Delta(L)\\ \lambda_{0}^{(\beta)} \neq 0}} \lambda_{0}^{(\beta)} \cdot b_{\beta}^{\dfrac{1}{\lambda_{0}^{(\beta)}}} \cdot \prod_{\substack{\alpha \in nz(\beta) \\ \alpha\neq 0}} \left(\dfrac{\lambda_{\alpha}^{(\beta)}}{a_{\beta,\alpha}} \right)^{\dfrac{\lambda_{\alpha}^{(\beta)}}{\lambda_{0}^{(\beta)}}}}
{\label{eqn:GPproblemtwo}}{}
\addConstraint{\sum_{\beta \in \Delta(L)} a_{\beta,\alpha}}{\leq L(\mathbf{\mu})_{\alpha}\text{ for all } \alpha\neq 0}
\addConstraint{\prod_{\alpha \in nz(\beta)} \left( \dfrac{a_{\beta,\alpha}}{\lambda_{\alpha}^{(\beta)}} \right)^{\lambda_{\alpha}^{(\beta)}}}{\geq b_{\beta}\text{ for all } \beta \in \Delta(L) \text{ with } \lambda_{0}^{(\beta)}=0}
\addConstraint{L(\mathbf{\mu})_{\beta}^{+}}{\leq b_{\beta} \text{ for all } \beta \in \Delta(L)}
\addConstraint{L(\mathbf{\mu})_{\beta}^{-}}{&\leq b_{\beta} \text{ for all } \beta \in \Delta(L)}.
\end{mini!}

This optimization problem is a relaxation of \eqref{eqn:GPproblem}.
The following theorem~\cite{GP} shows that it is a GP under weaker assumptions.

\begin{theorem}\label{the:GP}
Let $L$ be an ST-polynomial of the form~\eqref{eqn:lagrangianST}.
Assume that for every $\alpha \in V(L) \setminus 0$ the coefficient $L(\mathbf{\mu})_{\alpha}$
has exactly one strictly positive term.
Then the optimization problem~\eqref{eqn:GPproblemtwo} is a geometric program for $\mathbf{\mu}>0$.
Moreover, the GP provides a lower bound on the solution of the original optimization
problem~\eqref{eqn:optproblem}.
Denoting the solution of~\eqref{eqn:GPproblemtwo} by $\gamma_{\text{SONC}}$ and the solution
of~\eqref{eqn:GPproblem} by $\gamma^*$, we get
\begin{equation}
f_0 - \gamma_{\text{SONC}} \leq f_0 - \gamma^* \leq f^*,
\end{equation}
where $f^*$ is the optimal solution of \eqref{eqn:optproblem}.
\end{theorem}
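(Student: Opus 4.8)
The plan is to establish the two assertions of the theorem separately: first that~\eqref{eqn:GPproblemtwo} conforms to the format~\eqref{eqn:generalgp} of a geometric program for fixed $\mathbf{\mu}>0$, and then that its optimal value $\gamma_{\text{SONC}}$ dominates the optimal value $\gamma^*$ of~\eqref{eqn:GPproblem}, which in turn certifies the lower bound on $f^*$. For the GP claim I would check each ingredient of~\eqref{eqn:GPproblemtwo} against the definition, recalling that the barycentric coordinates $\lambda_\alpha^{(\beta)}$ are fixed nonnegative constants and that the decision variables are $\mu_i, a_{\beta,\alpha}>0$ and $b_\beta\geq 0$.

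Term by term, the objective is a posynomial because each summand $\mu_i g_{i,0}^{+}$ has the nonnegative coefficient $g_{i,0}^{+}=\max\{g_{i,0},0\}$ (so, after dropping vanishing terms, it is a sum of monomials with positive coefficients), while each remaining summand $\lambda_0^{(\beta)} b_\beta^{1/\lambda_0^{(\beta)}}\prod_{\alpha}(\lambda_\alpha^{(\beta)}/a_{\beta,\alpha})^{\lambda_\alpha^{(\beta)}/\lambda_0^{(\beta)}}$ is a single monomial in $b_\beta$ and the $a_{\beta,\alpha}$ with a positive constant coefficient. The constraint $\prod_{\alpha\in nz(\beta)}(a_{\beta,\alpha}/\lambda_\alpha^{(\beta)})^{\lambda_\alpha^{(\beta)}}\geq b_\beta$ rearranges to the monomial inequality $b_\beta\prod_{\alpha}(\lambda_\alpha^{(\beta)}/a_{\beta,\alpha})^{\lambda_\alpha^{(\beta)}}\leq 1$, and the two constraints $L(\mathbf{\mu})_\beta^{+}\leq b_\beta$, $L(\mathbf{\mu})_\beta^{-}\leq b_\beta$ rearrange to $L(\mathbf{\mu})_\beta^{\pm}/b_\beta\leq 1$; since $L(\mathbf{\mu})_\beta^{+}$ and $L(\mathbf{\mu})_\beta^{-}$ are by definition sums of terms $\mu_i\lvert g_{i,\beta}\rvert$ with positive coefficients, these are posynomials divided by a monomial, hence posynomials, so each constraint has the required form. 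The one place where the hypothesis is genuinely used is the constraint $\sum_{\beta}a_{\beta,\alpha}\leq L(\mathbf{\mu})_\alpha$: here I would write $L(\mathbf{\mu})_\alpha=P_\alpha-N_\alpha$, where $P_\alpha$ collects the single strictly positive term and $N_\alpha\geq 0$ collects the remaining nonpositive terms. The assumption that $L(\mathbf{\mu})_\alpha$ has exactly one strictly positive term guarantees that $P_\alpha$ is a monomial, while $N_\alpha$ is a posynomial in $\mathbf{\mu}$; moving $N_\alpha$ to the left yields $(\sum_\beta a_{\beta,\alpha}+N_\alpha)/P_\alpha\leq 1$, again a posynomial over a monomial, i.e.\ a valid GP constraint. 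This reduction is the crux of the argument and is precisely what the weakened hypothesis buys compared with Theorem~\ref{the:GPone}.

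For the bound chain I would argue that~\eqref{eqn:GPproblemtwo} is a restriction of~\eqref{eqn:GPproblem} in both feasible set and objective. Any point feasible for~\eqref{eqn:GPproblemtwo} satisfies $\max\{L(\mathbf{\mu})_\beta^{+},L(\mathbf{\mu})_\beta^{-}\}\leq b_\beta$, and since $\lvert L(\mathbf{\mu})_\beta\rvert=\lvert L(\mathbf{\mu})_\beta^{+}-L(\mathbf{\mu})_\beta^{-}\rvert\leq\max\{L(\mathbf{\mu})_\beta^{+},L(\mathbf{\mu})_\beta^{-}\}$, constraint~\eqref{eqn:constraintGPthree} holds as well; the remaining constraints coincide, so the feasible region of~\eqref{eqn:GPproblemtwo} is contained in that of~\eqref{eqn:GPproblem}. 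On this common region the objective of~\eqref{eqn:GPproblemtwo} dominates that of~\eqref{eqn:GPproblem}, because $g_{i,0}^{+}\geq g_{i,0}$ and $\mu_i\geq 0$ while the $b_\beta$- and $a_{\beta,\alpha}$-dependent parts are identical. Minimizing a larger objective over a smaller set can only increase the optimum, so $\gamma_{\text{SONC}}\geq\gamma^*$, i.e.\ $f_0-\gamma_{\text{SONC}}\leq f_0-\gamma^*$. I would then invoke the already-established lower-bound property of~\eqref{eqn:GPproblem}, namely $f_0-\gamma^*\leq f^*$ from Theorem~\ref{the:GPone} (this being the Lagrangian bound, which holds independently of the GP structure), to close the chain.

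The main obstacle I anticipate is the careful bookkeeping in the constraint $\sum_\beta a_{\beta,\alpha}\leq L(\mathbf{\mu})_\alpha$: one must confirm that, under the single-positive-term hypothesis, the positive part $P_\alpha$ is genuinely a single monomial, so that division by it preserves the posynomial structure, and that the sign split into $P_\alpha$ and $N_\alpha$ is consistent with $\mathbf{\mu}>0$ and with $\alpha\in V(L)$ being a monomial-square vertex. The relaxation inequalities, by contrast, reduce to the elementary comparison $\max\{L(\mathbf{\mu})_\beta^{+},L(\mathbf{\mu})_\beta^{-}\}\leq b_\beta\Rightarrow\lvert L(\mathbf{\mu})_\beta\rvert\leq b_\beta$ together with $g_{i,0}^{+}\geq g_{i,0}$, and should be routine once this containment is recorded.
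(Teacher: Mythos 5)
Your proposal is correct: the paper itself states Theorem~\ref{the:GP} as a known result imported from the literature (the reference cited as~\cite{GP}) and gives no proof, and your argument reproduces exactly the standard one from that source — term-by-term verification that~\eqref{eqn:GPproblemtwo} has posynomial/monomial structure, with the single-positive-term hypothesis used precisely where you say (to divide the vertex-coefficient constraint by the monomial $P_\alpha$), followed by feasible-set containment plus pointwise objective domination to get $\gamma_{\text{SONC}} \geq \gamma^*$, and the Lagrangian/SONC certificate bound for $f_0 - \gamma^* \leq f^*$. Your remark that the latter bound holds independently of the GP-structure assumptions of Theorem~\ref{the:GPone} is the right justification for invoking it under the weaker hypotheses here.
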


\subsection{Lower Bounds for Arbitrary Constrained Polynomial Optimization Problems}

The relaxation methods described so far in this section are designed for ST-polynomials.
Recall that, by Definition~\ref{def:STpoly}, an ST-polynomial satisfies two conditions: the
vertices of the Newton polytope must correspond to monomial square terms and form a simplex.
The first condition is a necessary condition for nonnegativity of a polynomial~\cite{reznick1978extremal}.
The second condition, however, may not hold for some nonnegative polynomials, and one can obtain a
SONC certificate by applying an extended technique based on SONC certificates.

Consider a polynomial $f$ that satisfies the first condition and violates the second condition.
The idea of the extended method~\cite{seidler2018experimental,GP}, is to write $f$ as a sum of
ST-polynomials.
To this end, the algorithm presented in~\cite{seidler2018experimental} covers all terms of $f$
that are not monomial squares by monomial square terms by solving a linear program for each
non-monomial-square term.
Then it splits the coefficients of the terms of $f$ between the new polynomials so that summing
these polynomials up yields $f$.
The coefficients can either be split evenly, though this does not guarantee an optimal splitting,
or one can solve a modified version of the SONC optimization problem in order to find an optimal
coefficient distribution.

%%%%%%%%%%%%%%%%%%%%%%%%%%%%%%%%%%%%%%%%%%%%%%%%%%%%%%%%%%%%%%%%%%%

\section{Polynomial-Bound Constraints}\label{sec:polynomial-bound}

As mentioned in the previous section, a necessary condition for the nonnegativity of a polynomial is that all vertices $V(f)$ are
monomial squares, that is, $V(f) \subseteq \text{MoSq}(f)$.
Therefore, when working with an arbitrary polynomial, it may be necessary to reformulate the lower
bounding problem so that each term that is not a monomial square is covered by even exponents.
Moreover, a relaxation that is solved as part of a branch-and-bound algorithm must be able to
utilize tighter variable bounds in the nodes of the tree in order to obtain lower bounds of improving quality.

We assume that each variable has finite bounds.
However, directly adding a term corresponding to a variable bound, that is, $(x_i-l_i)$ or
$(u_i-x_i)$, to the Lagrangian function does not lead to an improvement in the SONC bound since $x_i$ is not
a monomial square, and is therefore treated by the SONC relaxation as a `negative' term.

In order to circumvent this issue, we derive polynomial constraints on $x_i$ that are valid with
respect to the bounds on $x_i$.
The goal is to find a polynomial $h(x_i)$ that is nonnegative on $[l_i,u_i]$ such
that, when added to the Lagrangian, $h(x_i)$ adds monomial squares to it, but not new
non-monomial-square terms.

Let $L'$ denote the Lagrangian function after the replacement of variable bounds with
polynomial-bound constraints.
The choice of such a polynomial is not unique and must take the current structure of the exponent
set of $L$ into account.

Let $A'$ be a matrix defining exponents introduced in polynomial-bound constraints, where an entry
$\alpha'_{ji}$ denotes the exponent for variable $j$ in the $i$th polynomial-bound constraint, and a column
$\alpha'_{*i}$ defines the exponent vector corresponding to the $i$th polynomial-bound constraint.
We propose to use polynomial constraints of the form
\begin{equation*} \label{eqn:boundcons}
\mathbf{x}^{\alpha'_{*i}} \leq \max \{\vert l_i \vert, \vert u_i \vert \}^{\alpha'_{ii}},
\end{equation*}
where $\alpha'_{ji} = 0$ if $j \neq i$, and refer to
them as \emph{polynomial-bound constraints}.

The choice of exponents $A'$ will aim at ensuring that all vertices of the Newton polytope
New($L'$) are monomial squares.
In other words, this requires a set of exponents such that these exponents together
with the even exponents $\alpha \in V(L)$ provide a valid cover for each $\beta\in\overline{\text{MoSq}}(L)$.

Let $\mathcal{A}' = \{\alpha_{*1}, \dots, \alpha_{*n}\} \subset (2\mathbb{N})^n$ denote the set of new exponents, and let $V(L')$
denote the set of all vertices of the Newton polytope of $L$ after the transformation of variable
bounds into polynomial-bound constraints.
We require that the vertices $V(L')$ provide a cover for all $\beta \in \overline{\text{MoSq}}(L)$, that is,

\begin{align}\label{eqn:OPexponent}
&\forall~\beta \in \overline{\text{MoSq}}(L) ~\exists~ \lambda^{(\beta)} \in \mathbb{R}_+^{n+1}:
\beta= \sum_{\alpha \in V(L')} \lambda_\alpha^{(\beta)} \alpha \text{ and } \sum_{\alpha \in V(L')} \lambda_\alpha^{(\beta)}= 1.
\end{align}

The following theorem provides a choice of the set $\mathcal{A}'$ that ensures that the above
requirement holds:

\begin{theorem}\label{the:OPexponent}
Let $A' \in \mathbb{N}^{n \times n}$ be a matrix defining the exponents in polynomial-bound
constraints, and let $\mathcal{A}' = \{\alpha_{*1}, \dots, \alpha_{*n}\}$, where $\alpha'_{ji} = 0$
if $j \neq i$, and
$$\alpha'_{ii} \geq (n+(n \mod 2))\cdot \max_{\beta \in \overline{\text{MoSq}}(L)} ||\beta||_{\infty}.$$
Then $\mathcal{A}' \subset (2\mathbb{N})^n$ and~\eqref{eqn:OPexponent} holds, that is, all terms
$\beta\in \overline{\text{MoSq}}(L)$ are covered by some subset of the exponents $V(L')$.
\end{theorem}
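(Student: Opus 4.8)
The plan is to prove the two assertions separately: first that every new exponent vector is even, and then that the simplex formed by the origin together with the $n$ new exponents covers each non-monomial-square $\beta$. For the evenness claim, I would observe that $n+(n\bmod 2)$ is even for every $n$ (it equals $n$ when $n$ is even and $n+1$ when $n$ is odd) and that $M := \max_{\beta\in\overline{\text{MoSq}}(L)}\|\beta\|_\infty$ is a nonnegative \emph{integer}, since each $\beta$ is an integer vector. Hence the prescribed lower bound $(n+(n\bmod 2))\,M$ is itself an even nonnegative integer, so taking $\alpha'_{ii}$ to be an even integer meeting the bound (for instance equality) makes the single nonzero entry of $\alpha_{*i}$ even, while all off-diagonal entries vanish by hypothesis. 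Thus $\alpha_{*i}\in(2\mathbb{N})^n$ for every $i$, i.e. $\mathcal{A}'\subset(2\mathbb{N})^n$.

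For the covering condition I would exhibit an explicit convex combination rather than argue abstractly. Fix $\beta\in\overline{\text{MoSq}}(L)$ and consider the $n+1$ monomial-square exponents $0,\alpha_{*1},\dots,\alpha_{*n}$, where $\alpha_{*i}=\alpha'_{ii}e_i$ and $e_i$ is the $i$-th standard basis vector. These are affinely independent, since the vectors $\alpha_{*i}-0=\alpha'_{ii}e_i$ are linearly independent, so they span a genuine simplex. I would then set the candidate weights $\lambda^{(\beta)}_{\alpha_{*i}} := \beta_i/\alpha'_{ii}$ for $i=1,\dots,n$ and $\lambda^{(\beta)}_0 := 1-\sum_{i=1}^n \beta_i/\alpha'_{ii}$. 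A one-line computation gives $\sum_i \lambda^{(\beta)}_{\alpha_{*i}}\alpha_{*i} + \lambda^{(\beta)}_0\cdot 0 = \sum_i \beta_i e_i = \beta$ and $\sum_\alpha \lambda^{(\beta)}_\alpha = 1$, so the only thing left to verify is nonnegativity of these weights.

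The nonnegativity of $\lambda^{(\beta)}_0$ is precisely where the magnitude prescribed for $\alpha'_{ii}$ is used, and I expect this to be the crux of the argument. Since $\beta_i \le \|\beta\|_\infty \le M$ and $\alpha'_{ii} \ge (n+(n\bmod 2))\,M \ge nM$, each term obeys $\beta_i/\alpha'_{ii} \le M/(nM) = 1/n$; summing over $i=1,\dots,n$ yields $\sum_i \beta_i/\alpha'_{ii} \le 1$, hence $\lambda^{(\beta)}_0 \ge 0$, whereas $\lambda^{(\beta)}_{\alpha_{*i}} = \beta_i/\alpha'_{ii} \ge 0$ because exponents are nonnegative. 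This shows $\beta$ lies in the simplex spanned by the monomial squares $0,\alpha_{*1},\dots,\alpha_{*n}$, which is exactly the statement \eqref{eqn:OPexponent}. Note that the $(n\bmod 2)$ correction plays a dual role here: it simultaneously delivers an even value for the evenness claim and keeps the bound at least $nM$, which is all the covering inequality needs.

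It remains to confirm that the covering simplex can be taken inside $V(L')$. The origin is the unique $\ell_1$-minimizer over the (nonnegative) support and is therefore a vertex, and each $\alpha_{*i}=\alpha'_{ii}e_i$ is extremal in the direction $e_i$ among all inner exponents, since every $\beta\in\overline{\text{MoSq}}(L)$ has $\beta_i\le M<\alpha'_{ii}$. The one situation needing care is the degenerate case in which some $\alpha_{*i}$ fails to be a vertex because an original monomial-square exponent happens to have an even larger $i$-th coordinate. Even then the conclusion survives: each $\beta\in\overline{\text{MoSq}}(L)$ lies in the convex hull of monomial squares and is itself not a monomial square, so it cannot be a vertex of $\text{New}(L')$; consequently every vertex of $\text{New}(L')$ is a monomial square, and writing $\beta\in\text{New}(L')=\text{conv}(V(L'))$ as a combination of at most $n+1$ vertices via Carathéodory's theorem recovers a cover by elements of $V(L')$, again giving \eqref{eqn:OPexponent}.
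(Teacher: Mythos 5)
Your proof is correct and follows essentially the same route as the paper's: the identical explicit barycentric coordinates $\lambda_{\alpha_{*i}}^{(\beta)} = \beta_i/\alpha'_{ii}$ and $\lambda_0^{(\beta)} = 1 - \sum_{i=1}^n \beta_i/\alpha'_{ii}$, with nonnegativity of $\lambda_0^{(\beta)}$ derived from the bound $\beta_i/\alpha'_{ii} \leq 1/(n+(n \bmod 2)) \leq 1/n$, and the same evenness observation for $\mathcal{A}' \subset (2\mathbb{N})^n$. Your final paragraph actually goes beyond the paper's proof, which tacitly assumes $\{0\} \cup \mathcal{A}' \subseteq V(L')$; your Carath\'eodory argument handling the degenerate case where some $\alpha_{*i}$ fails to be a vertex of $\text{New}(L')$ patches a minor gap that the paper leaves unaddressed.
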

\begin{proof}

The condition $\mathcal{A}' \in (2\mathbb{N})^n$ holds since $(n + (n \mod 2))$ is an even number.
We now need to prove the existence of barycentric coordinates $\lambda^{(\beta)}$
satisfying~\eqref{eqn:OPexponent}.

For each $\beta \in \overline{\text{MoSq}}(L)$, consider the following barycentric coordinates:
\begin{align*}
\lambda_{\alpha'_{i*}}^{(\beta)} &= \dfrac{\beta_i}{\alpha'_{ii}}  & \text{ for } i=1,\ldots,n,\\
\lambda_0^{(\beta)} &= 1-\sum_{\alpha' \in \mathcal{A}'} \lambda_{\alpha'}^{(\beta)}, &\\
\lambda_\alpha^{(\beta)} &= 0 &\text{ for }\alpha \in V(L). 
\end{align*}
Then $\beta$ can be written as
\begin{align*}
\beta &= \left(\begin{array}{c} 
\lambda_{\alpha'_{1*}}^{(\beta)} \alpha'_{11} \\ \vdots \\ \lambda_{\alpha'_{n*}}^{(\beta)} \alpha'_{nn}
\end{array}\right)
 = \lambda_0^{(\beta)} \mathbf{0} + \sum_{\alpha \in V(L)} 0\cdot \alpha +\sum_{\alpha' \in \mathcal{A}'} \lambda_{\alpha'}^{(\beta)} \alpha' = \\
 & = \lambda_0^{(\beta)} \mathbf{0} + \sum_{\alpha \in V(L)} \lambda_\alpha^{(\beta)}\alpha +\sum_{\alpha' \in \mathcal{A}'} \lambda_{\alpha'}^{(\beta)} \alpha'.
\end{align*}
Thus, the condition $\beta= \sum_{\alpha \in V(L')} \lambda_\alpha^{(\beta)} \alpha$
of~\eqref{eqn:OPexponent} is fulfilled.
It remains to show that $\lambda_{\alpha}^{(\beta)} \geq 0$ for all $\alpha \in V(L')$, and $\sum_{\alpha \in V(L')} \lambda_\alpha^{(\beta)} = 1$.

The condition $\sum_{\alpha \in V(L')} \lambda_{\alpha}^{(\beta)} = 1$ and the
nonnegativity of all components of $\lambda$ except for $\lambda_0^{(\beta)}$ follow
directly from the definition of $\lambda$.
To prove that $\lambda_0^{(\beta)} \geq 0$, observe that
$\alpha'_{ii} \geq (n+(n \mod 2))\cdot \max_{\beta \in \overline{\text{MoSq}}(L)} ||\beta||_{\infty}$ and
$\lambda_{\alpha'_{i*}}^{(\beta)} = \dfrac{\beta_i}{\alpha'_{ii}}$, and therefore
$$\lambda_{\alpha'_{i*}}^{(\beta)} \leq \dfrac{\beta_i}{(n+(n \mod 2))\cdot \max_{\beta \in \overline{\text{MoSq}}(L)} ||\beta||_{\infty}} \leq \dfrac{1}{n+(n \mod 2)}.$$
This implies that
\begin{align*}
\sum_{\alpha' \in \mathcal{A}'} \lambda_{\alpha'}^{(\beta)} &\leq \dfrac{n}{n+(n \mod 2)} \leq 1 \\
\Rightarrow \lambda_0^{(\beta)} &= 1 - \sum_{\alpha' \in \mathcal{A}'} \lambda_{\alpha'}^{(\beta)} \geq 0.
\end{align*}
\end{proof}

The above theorem provides a sufficient condition on the exponents $\alpha'$ in order to guarantee
a cover for all inner terms.
Since the cost of computing a SONC certificate does not depend on the degree of a polynomial,
introducing terms with high degrees is not an issue.
The practical choice of exponent values will be further discussed in Section~\ref{sec:computational}.

\section{SONC Relaxations in a Branch-and-Bound Algorithm}\label{sec:implementation}

We have implemented an experimental algorithm that combines an LP-based branch-and-bound algorithm
with SONC relaxations.
The goal was to assess the potential of integrating the branch-and-bound and SONC-based approaches
to constrained polynomial optimization.
Our algorithm solves SONC relaxations in some nodes of the branch-and-bound tree and, if they yield
a better dual bound than the standard LP relaxations, uses this bound.

\subsection{Software: SCIP and POEM}

We used the polynomial optimization software POEM in order to solve SONC
relaxations.
POEM mainly employs SONC nonnegativity certificates, although it also supports SAGE and SOS
certificates for unconstrained problems.
To solve SONC relaxations, it constructs a geometric problem and applies a logarithmic
transformation in order to obtain a convex problem, which it then passes to the solver ECOS~\cite{domahidi2013ecos}
via CVXPY~\cite{diamond2016cvxpy}.
The main focus of POEM is currently on unconstrained optimization, but it also provides
functionality for constrained problems.

The general-purpose solver SCIP~\cite{SCIPoptsuite80} provided the branch-and-bound algorithm and
called POEM in order to solve SONC relaxations.
SCIP is a Constraint Integer Programming (CIP) solver, which means that its design allows it to
handle any problems where fixing all integer variables results in a linear or nonlinear program.
In particular, SCIP can solve problems belonging to two important subclasses of the CIP problem
class: mixed-integer linear programs (MILPs) and mixed-integer nonlinear programs (MINLPs).
SCIP implements a spatial branch-and-bound algorithm which, by default, solves linear programming
(LP) relaxations at each node of the branch-and-bound tree.

SCIP has a plugin-based structure, wherein plugins implementing various components of the solving
process such as presolving techniques, primal heuristics, domain propagation techniques, cutting
plane separation, and others techniques, are coordinated by the core of the solver.
Users can add new plugins.

For the purposes of our implementation, we created a new relaxator plugin.
Relaxator plugins enable the solving of custom relaxations instead of default LP relaxations.
When called, a relaxator plugin can return a dual bound or report infeasibility of the current
node.
Additionally, relaxators may provide primal solution candidates, reduce variable domains and add
branching candidates and cutting planes.

Further, since polynomial problems are nonlinear problems, the handling of nonlinear constraints in
SCIP is relevant to our implementation.
Nonlinear expressions in SCIP are represented by expression graphs, where the nodes represent
operations and the arcs represent the flows of computation.
SCIP constructs an extended formulation of the problem in a lifted space in order to construct LP
relaxations.
It stores this extended formulation alongside the original formulation of the problem.

\subsection{Algorithm and Implementation}

We implement SONC relaxations as a relaxator plugin in SCIP via the Python interface PySCIPOpt\footnote{Development branch SONCRelax at \url{https://github.com/scipopt/PySCIPOpt}}~\cite{maher2016pyscipopt}.
The main steps that the relaxator performs are the following:
\begin{itemize}
\item it inspects the structure of the problem in order to decide if a SONC relaxation should be
applied to it;
\item reverts the creation of an objective variable to represent the nonlinear part of the objective
function;
\item converts the problem into a matrix representation, where each polynomial is given by a vector
of term coefficients and a matrix comprised of exponent vectors for each monomial term;
\item if needed, adds polynomial-bound constraints;
\item passes the problem to POEM, which solves the SONC relaxation and returns either a dual bound
or a failure status;
\item and reports the results back to SCIP.
\end{itemize}
The relaxator is called in the root node and in every 10 nodes of the branch-and-bound tree.
In the following, we will further explain some steps of the algorithm.

\paragraph{Problem inspection}
First, the relaxator analyses the expressions in nonlinear constraints in order to determine
whether they are polynomials in the original formulation of the problem.
The SONC relaxation is only solved when at least half of constraints are polynomial constraints.
Linear constraints are not counted as polynomial constraints, but are added to the relaxation in
the same way as polynomial constraints are.
The relaxator does not use constraints that cannot be represented as polynomial constraints.

\paragraph{Objective variable removal}
SCIP handles nonlinear objective functions by reformulating the problem so that to move the
nonlinear part of the objective function into a constraint.
That is, SCIP applies the following reformulation:

\begin{minipage}{.45\linewidth}
\begin{align*}
&\min f(\mathbf x) + \mathbf c^T\mathbf x\\
&\text{subject to } &\rightarrow\\
&g(\mathbf x) \geq 0,
\end{align*}
\end{minipage}
\begin{minipage}{.45\linewidth}
\begin{align*}
&\min y + \mathbf c^T\mathbf x\\
&\text{subject to }\\
& y - f(\mathbf x) \geq 0, ~~g(\mathbf x) \geq 0.
\end{align*}
\end{minipage}
\bigskip

\noindent
This, however, introduces a non-square monomial ($y$) which is not covered by any monomial squares
since $y$ does not participate in any other monomial terms.
Therefore, the relaxator reverses this reformulation by detecting variables that are present only
in the objective and one constraint (which imposes a restriction on how much the variable can
decrease), removing such variables and moving the rest of the corresponding constraints to the
objective function.

\paragraph{Polynomial-bound constraints}
The relaxator calls POEM to compute the vertices of the Newton polytope of the Lagrangian function.
If the vertices are monomial squares and form a simplex, that is, the polynomial is an
ST-polynomial, then the algorithm will preserve the existing structure of the polynomial.
The relaxator may still add polynomial-bound constraints $(\max\{\rvert\ell\rvert, \rvert u\rvert\})^{\alpha'} - \mathbf x^{\alpha'}
\geq 0$, but it uses only those exponents $\alpha'$ that belong to the original exponent set $\mathcal{A}(L)$.
Thus, no new monomial terms are introduced, and the Lagrangian function remains an ST-polynomial.

If the Lagrangian function is not an ST-polynomial, then either some of the vertices are
non-monomial-squares or the vertices are affinely dependent, or both.
If some vertices are non-monomial-squares, then the necessary condition for the nonnegativity of
the polynomial is violated, and the addition of polynomial-bound constraints is necessary in order
to obtain a finite dual bound by applying a nonnegativity certificate.
Adding such constraints, however, does not guarantee that the vertices of the new polynomial form a
simplex.

POEM in its current implementation is unable to solve relaxations of constrained
optimization problems where the vertices of the Lagrangian function polynomial are affinely
dependent.
Recall that in the unconstrained case, a non-ST-polynomial is split up into several ST-polynomials,
and then dual bounds are computed for each of those polynomials and combined to obtain the dual
bound on the entire polynomial.
In the constrained case, however, the polynomial to be optimized has variable coefficients that
depend on the Lagrangian multipliers $\mu$.
If the same variable $\mu_j$ is involved in several ST-polynomials obtained by such splitting,
then, in general, it will have different values in solutions of the corresponding problems.
In order to obtain a valid bound, however, the value of $\mu_j$ must be similar for all
ST-polynomials.
Ensuring this is by itself a challenging task, and is not done in the current implementation.

Therefore, given a polynomial with affinely dependent monomial square vertices of the Newton
polytope, we always use the terms obtained from polynomial-bound constraints as the cover for all
inner points.
These vertices always form a simplex that covers all inner points.
Other vertices are disregarded so that to preserve this property.
This is a valid relaxation, since monomial square vertices can only increase the dual bound.
However, bound quality is worsened when such vertices are discarded.

Thus, the steps for extending the Lagrangian via polynomial-bound constraints are the following:
\begin{itemize}
\item If $L(\mathbf x,\mu)$ is an ST-polynomial, add only such polynomial-bound constraints that do not
introduce new monomials;
\item If $L(\mathbf x,\mu)$ is not an ST-polynomial, add polynomial-bound constraints with exponents as in
Theorem~\ref{the:OPexponent} and discard all other vertices of the Newton polytope.
\end{itemize}

%%%%%%%%%%%%%%%%%%%%%%%%%%%%%%%%%%%%%%%%%%%%%%%%%%%%%%%%%%%%%%%%%%%

\section{Computational Results}\label{sec:computational}

%[TODO I would start this section by formulating three questions you want to answer with the experiments.  Then how this is done, including explanation of the status analysis.  Only then the technical stuff and test set.]

In this section, we evaluate the effect of polynomial-bound constraints on bounds yielded by SONC
relaxations, in particular the numbers of instances for which a SONC relaxation of the root node
yields a finite dual bound.
Then we compare three choices of exponents of polynomial-bound constraints, and finally, we assess
the impact of SONC relaxations on the performance of a spatial branch-and-bound algorithm.

To this end, we run SCIP with standard SONC relaxations, with SONC relaxations enhanced by
polynomial-bound constraints, and default SCIP.
We distinguish the following termination statuses of the SONC relaxator:
\begin{itemize}
\item \textit{optimal}: the relaxator successfully computed a finite dual bound.

\item\textit{infeasible}: infeasibility of the node subproblem was detected.

\item \textit{unsolvable}: no valid decomposition of the Lagrangian into circuit polynomials could
be computed, or none of the optimization problems \eqref{eqn:GPproblem} and \eqref{eqn:GPproblemtwo}
was a GP, or ECOS failed to compute a solution of the GP due to a numerical error.

\item \textit{interrupted}: the relaxator was interrupted due to a time limit.

\item \textit{did not run}: the relaxator was not called
since SCIP trivially solved the instance.
\end{itemize}

We ran the tests on a development version of SCIP on a cluster of 2.60 GHz Intel Xeon E5-2660
processors with 128 GB memory per node.
The time limit was 3600 seconds and the optimality gap tolerance was $0.01\%$.
The relaxation handler was called with a frequency of 10, i.e. the relaxator was executed on
subproblems in depth levels that are multiples of 10 of the branch and bound tree.
To solve the GP, POEM called the solver ECOS via CVXPY.

We use the same test set that was used by González-Rodríguez et al.~\cite{gonzalez2022computational}.
The test set contains $349$ instances chosen from two different sources.
$180$ instances come from the set of polynomial instances randomly generated by Dalkiran and
Sherali~\cite{DalkiranSherali}, and $169$ polynomial programming instances with continuous
variables were chosen from MINLPLib~\cite{minlplib}.

In all instances, variables are bounded in the original problem formulation. 
The only exception is the variable that is introduced in order to rewrite a nonlinear objective
function as a linear objective function, and is often unbounded.
However, the relaxator removes this variable from the formulation when the linear objective is
transformed back into nonlinear form, as described in Section~\ref{sec:implementation}.

%\bigskip

%[TODO I think the last section on finiteness of bounds must come first.  This is the strongest and most important result, and without it running b+b would not even make sense. Readers must understand this first. Then choice of exponents, then branch-and-bound.

%  I see why you put the choice section first, because it nicely introduces the different variants, but this can be solved differently.

%One alternative would of course be to include the finiteness and choice section into one and include standard SONC into Table~1; but I think your decision to split is good: it creates more content and also makes it nicer to read, putting emphasis on finiteness, while the choice of exponents is more of a detailed analysis.]

\subsection{Finiteness of Bounds}

We compare the performance of standard SONC relaxations to the performance of SONC relaxations
with added polynomial-bound constraints of the form
$$x_i^{\alpha_i} \leq \max \{\vert l_i \vert, \vert u_i \vert \}^{\alpha_i},$$
where $\alpha_i = 2 \cdot \max\{\beta_i ~\rvert~ \beta \in \Delta(L) \} + 4$.
The choice of the exponent will be discussed in more detail in the next subsection.

Table~\ref{tab:boundsYesNo} compares results produced when using the relaxation enhanced with
polynomial-bound constraints with the results produced when using the standard SONC relaxation.
It reports the statuses of the relaxator in the root node.

\begin{table}[ht]
\begin{tabular*}{\linewidth}{@{\extracolsep{\fill}}cccccc}
\sphline
\it Test Run &  \multicolumn{4}{c}{\it Solution Status of the Relaxation}  \\ 
& \it optimal & \it infeasible & \it unsolvable & \it interrupted & \it did not run   \\
\sphline
Standard SONC & 9 & 4 & 329 & 2 & 5 \\
SONC + PB & 330 & 0 & 0 & 14 & 5 \\
\sphline
\end{tabular*}
\smallskip
\caption[Comparison of the Relaxator with and without Bounds]{Effect of polynomial-bound (PB) constraints on relaxator status in the root node.}
\label{tab:boundsYesNo}
\end{table}

For most instances, the standard SONC relaxation fails to find a lower bound.
This is due to the fact that for most instances the Lagrangian function does not have a Newton
polytope with even vertices. 
Adding the polynomial-bound constraints increased the number of instances where a finite lower
bound was found from 9 to 330, as the bound constraints always provide a valid cover.

The relaxator, however, no longer detected infeasibility on the 4 instances that the standard SONC
relaxation identified as infeasible.
This might have been caused either by the change in the constant term resulting from the addition
of polynomial-bound constraints, or the relaxator not using the monomial squares that, after the
addition of polynomial-bound constraints, lie in the interior of the Newton polytope.

For 12 more instances, the relaxator was interrupted due to a time limit.
The reason for this is that with polynomial-bound constraints, it attempted to solve more GPs than
the version without the polynomial-bound constraints, since the latter often terminated after
establishing that the Newton polytope had non-monomial-square vertices.

\subsection{Choice of Exponents in Polynomial-Bound Constraints}

This section analyses the effect that the choice of exponent matrix $A' \in \mathbb{N}^{n \times n}$
in polynomial-bound constraints has on performance and dual bound quality.
As before, the matrices are diagonal, and we compare the following values of the diagonal entries:
\begin{align}
\alpha^{(n0)}_{ii} &= (n+(n \mod 2))\cdot \max\{\beta_i ~\rvert~ \beta \in \Delta(L) \},\\
\alpha^{(n4)}_{ii} &= (n + (n \mod 2)) \cdot \max\{\beta_i ~\rvert~ \beta \in \Delta(L) \} + 4,\\
\alpha^{(4)}_{ii} &= 2 \cdot \max\{\beta_i ~\rvert~ \beta \in \Delta(L) \} + 4,
\end{align}
for $i=1,\ldots,n$ and denote the respective matrices by $A^{(n0)}$, $A^{(n4)}$ and $A^{(4)}$.

The matrix $A^{(n0)}$ defines the minimal exponents that satisfy the conditions of
Theorem~\ref{the:OPexponent}, thus guaranteeing a valid cover.
However, for some instances, using $A^{(n0)}$ creates degenerate points, that is, inner points that
lie on a face of the Newton polytope.
A known issue in POEM can cause the relaxator to terminate without successfully finding a dual
bound, even though there exists a valid cover.
To avoid this issue, we increase the value of the nonzero component by $4$, which corresponds to
the matrix $A^{(n4)}$.
If the exponents defined by $A^{(n4)}$ are vertices of the Newton polytope, then no inner points
are degenerate.
This is the case for any instance where the Lagrangian function is not an ST-polynomial.

Both with $A^{(n0)}$ and $A^{(n4)}$, the degree increases linearly with the number of
variables.
Although the complexity of SONC bound computation does not depend on the degree, very high
degrees can lead to numerical issues, particularly when variable bounds have large absolute values.
This motivates our use of the values $A^{(4)}$.
Although this choice does not satisfy the conditions of Theorem~\ref{the:OPexponent} and thus does not in general
guarantee a valid cover, it is independent of $n$ and leads to a much smaller increase in
the degree.
For $n=1,2$, the matrices $A^{(n4)}$ and $A^{(4)}$ are equivalent, and $A^{(4)}$
guarantees a valid cover for each non-monomial-square term.
For $n>2$, if the inner terms are close to the axes, that is, the monomials are sparse, then
$A^{(4)}$ is still likely to provide a valid cover. 

Table~\ref{tab:exponents} reports the numbers of instances where the root node SONC relaxation
terminated with a given status for a given exponent choice.
The exponent matrix $A^{(n0)}$ leads to $13$ instances where the SONC relaxation was unsolvable due
to degenerate points, or by numerical issues caused by the value
$\max \{\vert l_i \vert, \vert u_i \vert \}^{\alpha_{ii}}$ becoming too large.
With $A^{(n4)}$, there are $9$ unsolvable instances due to such numerical issues.
The use of $A^{(4)}$ results in polynomial-bound constraints of lower degrees and alleviates these
numerical issues while still providing a valid cover for all instances in our test set.
Further, the use of exponents $A^{(4)}$ results in the smallest number of timeouts,
leading to the SONC relaxation producing valid finite dual bounds for the largest number of
instances when $A^{(4)}$ is used.

\begin{center}
\begin{table}[h]
\begin{tabular*}{\textwidth}{@{\;\;\extracolsep{\fill}}cccccc}
\toprule
Exponents &  \multicolumn{5}{c}{Solution Status}  \\
& \it optimal & \it infeasible & \it unsolvable & \it interrupted & \it did not run   \\
\midrule
$A^{(n0)}$ & 308 & 0 & 13 & 23 & 5 \\
$A^{(n4)}$ & 314 & 0 & 9 & 21 & 5 \\ 
$A^{(4)}$ & 330 & 0 & 0 & 14 & 5 \\
\bottomrule
\end{tabular*}
\smallskip
\caption[Comparison of Statuses for Different Exponents]{Comparison of solution statuses for different exponent choices.} \label{tab:exponents}
\end{table}
\end{center}

%times: the geomeans are 122.51, 120.29, 123.04; numbers of best: 25, 26, 20 (best defined as: within one percent of the best, difference with the next one is at least 10%, or: t1/t2 <= 1.01 and t1/t3 <= 0.9, where t2 <= t3

There is only a small difference in the root node dual bounds and the run times when using
different exponents.
For all but three instances, the relative differences between absolute values of the bound are
below 1\%.
On the remaining instances, there is no clear best choice.

Table~\ref{tab:exptime} shows the shifted geometric means of times and the numbers of instances
where a given exponent yielded the best time.
Given time $t$ for a given exponent and sorted times corresponding to the other two exponents, $t_1
\leq t_2$, the time $t$ is considered to be best if $t/t_1 \leq 1.01$ and $t/t_2 \leq 0.9$.

$A^{(n4)}$ leads to the fastest performance, followed by $A^{(n0)}$ and $A^{(4)}$.
However, the difference is small, and since the relaxator yields a valid dual bound  for more
instances when using $A^{(4)}$, we employ this as the default choice in the rest of our evaluations. 
\begin{table}[h]
\begin{tabular*}{\textwidth}{@{\;\;\extracolsep{\fill}}ccc}
\toprule
exponent & mean time & number of best   \\ \midrule
$A^{(n0)}$ &  122.51 & 25 \\
$A^{(n4)}$ & 120.29 & 26\\ 
$A^{(4)}$ & 123.04 & 20 \\
\bottomrule
\end{tabular*}
\smallskip
\caption[Comparison of Run Time with Different Exponents]{Comparison of run time with different exponent choices.} \label{tab:expoSolTime}
\label{tab:exptime}
\end{table}

\subsection{Effect of SONC Relaxations on Branch-and-Bound Performance}

Table~\ref{tab:SCIP-performance} compares the overall performance of default SCIP with the
performance of SCIP with the SONC relaxator.
It shows the geometric means of the solving time (shift 1 second) and nodes (shift 100 nodes), and
the numbers of instances where the LP relaxation (shown in the row ``Off'') or the SONC relaxation
(shown in the row ``On'') yielded the better dual bound in the root node.
On the remaining 38 instances, there was no difference between the LP and SONC dual bounds.

Overall, SONC relaxations are currently not competitive with the LP relaxations employed by default.
The geometric mean of the run time increases drastically when SONC relaxator is enabled, and SONC bounds are
stronger than LP bounds on only few instances.
The increase in the number of nodes is considerable smaller than the slowdown, which indicates that
the high computational cost of SONC relaxations compared to LP relaxations and the lack of warm
starting in the SONC relaxator are the main factors leading to the slowdown.
Further, the current lack of presolving, reduction and branching methods aimed at improving the
bounds yielded by SONC relaxations puts them at a disadvantage when compared to LP relaxations.

\begin{table}[h]
\begin{tabular*}{\linewidth}{@{\;\;\extracolsep{\fill}}cccc}
\toprule
Relaxator & Time & Nodes & Bound improvements \\
\midrule
Off & 25.29  & 27.04 & 305 \\
On  & 123.04 & 34.60 & 6 \\
\bottomrule
\end{tabular*}
\smallskip
\caption{Solver performance with and without the relaxator.}
\label{tab:SCIP-performance}
\end{table}

Table~\ref{tab:imprInst} provides a more detailed analysis for the 6 instances where enabling the
SONC relaxator lead to an improvement in the root node dual bound.
For two more instances, the SONC relaxator produced a tighter bound than the LP relaxation in
non-root nodes, but these instances are not included in the table.
For each instance in Table~\ref{tab:imprInst}, we report the LP and SONC dual bound, the primal
bound provided on the MINLPLib website\footnote{\url{https://www.minlplib.org}}, the percentage of
absolute gap closed, as well as information on the instance itself such as the highest polynomial
degree in the instance and the numbers of variables and constraints, disregarding the objective
variable and the objective constraint that are introduced in order to move the nonlinear part of
the objective into a constraint.

Despite the small number of such instances, the improvement in the root node dual bound is
considerable for each instance.
All instances but one are unconstrained optimization problems involving high polynomial degrees,
and on these instances SONC relaxations close between 76.1 and 91.8\% of the gap.
This is most likely due to the SONC approach for unconstrained optimization being more mature
than the SONC approach for constrained optimization, as well as the fact that LP relaxations of
problems with low-degree polynomials, especially quadratic problems, are very well developed; specialized relaxations such as SONC may be more beneficial for problems with high-degree
polynomials.

The improvement on waterund01 is smaller at 11.3\%, but
the fact that we can at all observe an improvement here, however, is already notable since waterund01 is a quadratic instance, and the
benefits of SONC relaxations are more pronounced for problems involving high degrees.

\begin{table}[h]
\begin{tabular*}{\linewidth}{@{\;\;\extracolsep{\fill}}cccccccc}
\toprule
Instance    & LP       & SONC     & Primal   & Gap closed & Degree & NVars & NConss \\
\midrule
ex4\_1\_1   & -385.308 & -97.8766 & -7.4873  & 76.1\%     & 6      & 1     & 0 \\
ex4\_1\_4   & -256     & -27      & 0.0      & 89.5\%     & 4      & 1     & 0 \\
ex4\_1\_6   & -3125    & -250     & 7.0      & 91.8\%     & 6      & 1     & 0 \\
ex4\_1\_7   & -166.636 & -44.1665 & -7.5     & 77.0\%     & 4      & 1     & 0 \\
mathopt5\_8 & -41.1406 & -8.2371  & -0.6861  & 81.3\%     & 6      & 1     & 0 \\
waterund01  & -307.557 & -262.889 & 86.8333  & 11.3\%     & 2      & 40    & 38 \\
\bottomrule
\end{tabular*}
\smallskip
\caption[Improvement of Dual Bounds in the Root Node by Relaxator]{Instances with root node dual bound improvements when using SONC relaxations.}
\label{tab:imprInst}
\end{table}

%%%%%%%%%%%%%%%%%%%%%%%%%%%%%%%%%%%%%%%%%%%%%%%%%%%%%%%%%%%%%%%%%%%

\section{Conclusion}\label{sec:conclusion}

In this paper, we have developed a method to guarantee that, given finite variable domains, the SONC
relaxation returns a finite dual bound.
This method enables the SONC relaxation to make use of tighter variable bounds in the
branch-and-bound tree.
Our experiments showed that for most instances in our test set, the standard SONC relaxation failed
to find a finite dual bound, whereas the SONC relaxation with added polynomial-bound constraints
found such a bound for all instances except the ones where the relaxator exceeded the time limit.
Thus, our approach drastically improves the applicability of SONC relaxations to constrained
polynomial optimization problems.

Further, we implemented an experimental algorithm that solves SONC relaxations of node subproblems
in a branch-and-bound tree.
In its current version, this algorithm is not competitive with the LP-based branch-and-bound
methods.
This is not surprising
since it lacks a full integration of SONC relaxations with the branch-and-bound
algorithm, whereas the integration of LP relaxations has been the topic of many years of research.
Moreover, the treatment of constraints and variable bounds in SONC relaxations needs further
refinement.

However, our experiments showed that our approach has potential, since for some instances, the SONC
relaxation yielded dual bounds that closed up to 91.8\% of the root node gap when compared to the LP
relaxation.
Further work would be focused on designing a SONC-based branch-and-bound algorithm where reductions
and branching decisions are aimed at strengthening SONC relaxations, and further improving the
capabilities of SONC relaxations to make use of bounds and constraints.

%%%%%%%%%%%%%%%%%%%%%%%%%%%%%%%%%%%%%%%%%%%%%%%%%%%%%%

%\backmatter

\section*{Acknowledgments}

The work for this article has been conducted within the Research
Campus Modal funded by the German Federal Ministry of Education and Research (BMBF grant numbers
05M14ZAM, 05M20ZBM).

\section*{Declarations}

\subsection*{Competing interests}

The authors have no competing interests to declare that are relevant to the content of this article.

\subsection*{Availability of data}

Datasets generated and analysed during the current study are available at
\url{https://github.com/scipopt/PySCIPOpt} and \url{https://www.minlplib.org/}.
The authors are in the process of making the remaining data publicly available.

%%===========================================================================================%%
%% If you are submitting to one of the Nature Portfolio journals, using the eJP submission   %%
%% system, please include the references within the manuscript file itself. You may do this  %%
%% by copying the reference list from your .bbl file, paste it into the main manuscript .tex %%
%% file, and delete the associated \verb+\bibliography+ commands.                            %%
%%===========================================================================================%%

\bibliographystyle{plain}
\bibliography{sonc}% common bib file

\begin{thebibliography}{10}

\bibitem{adams1986tight}
Warren~P Adams and Hanif~D Sherali.
\newblock A tight linearization and an algorithm for zero-one quadratic
  programming problems.
\newblock {\em Management Science}, 32(10):1274--1290, 1986.

\bibitem{adams1990linearization}
Warren~P Adams and Hanif~D Sherali.
\newblock Linearization strategies for a class of zero-one mixed integer
  programming problems.
\newblock {\em Operations Research}, 38(2):217--226, 1990.

\bibitem{ahmadi2011algebraic}
Amir~Ali Ahmadi.
\newblock {\em Algebraic relaxations and hardness results in polynomial
  optimization and {L}yapunov analysis}.
\newblock PhD thesis, Massachusetts Institute of Technology, 2011.

\bibitem{ahmadi2019dsos}
Amir~Ali Ahmadi and Anirudha Majumdar.
\newblock {DSOS} and {SDSOS} optimization: more tractable alternatives to sum
  of squares and semidefinite optimization.
\newblock {\em SIAM Journal on Applied Algebra and Geometry}, 3(2):193--230,
  2019.

\bibitem{SCIPoptsuite80}
Ksenia Bestuzheva, Mathieu Besan{\c{c}}on, Wei-Kun Chen, Antonia Chmiela, Tim
  Donkiewicz, Jasper van Doornmalen, Leon Eifler, Oliver Gaul, Gerald Gamrath,
  Ambros Gleixner, Leona Gottwald, Christoph Graczyk, Katrin Halbig, Alexander
  Hoen, Christopher Hojny, Rolf van~der Hulst, Thorsten Koch, Marco
  L{\"u}bbecke, Stephen~J. Maher, Frederic Matter, Erik M{\"u}hmer, Benjamin
  M{\"u}ller, Marc~E. Pfetsch, Daniel Rehfeldt, Steffan Schlein, Franziska
  Schl{\"o}sser, Felipe Serrano, Yuji Shinano, Boro Sofranac, Mark Turner,
  Stefan Vigerske, Fabian Wegscheider, Philipp Wellner, Dieter Weninger, and
  Jakob Witzig.
\newblock {The SCIP Optimization Suite 8.0}.
\newblock ZIB Report 21-41, Zuse Institute Berlin, 2021.

\bibitem{boyd2007tutorial}
Stephen Boyd, Seung-Jean Kim, Lieven Vandenberghe, and Arash Hassibi.
\newblock A tutorial on geometric programming.
\newblock {\em Optimization and engineering}, 8(1):67--127, 2007.

\bibitem{BoydVan}
Stephen Boyd and Lieven Vandenberghe.
\newblock {\em Convex optimization}, volume~9, pages 457--483.
\newblock Cambridge University Press, Cambridge, 2004.

\bibitem{chandrasekaran2016relative}
Venkat Chandrasekaran and Parikshit Shah.
\newblock Relative entropy relaxations for signomial optimization.
\newblock {\em SIAM Journal on Optimization}, 26(2):1147--1173, 2016.

\bibitem{chandrasekaran2017relative}
Venkat Chandrasekaran and Parikshit Shah.
\newblock Relative entropy optimization and its applications.
\newblock {\em Mathematical Programming}, 161(1):1--32, 2017.

\bibitem{commander2008jamming}
Clayton~W Commander, Panos~M Pardalos, Valeriy Ryabchenko, Oleg Shylo, Stan
  Uryasev, and Grigoriy Zrazhevsky.
\newblock Jamming communication networks under complete uncertainty.
\newblock {\em Optimization Letters}, 2(1):53--70, 2008.

\bibitem{commander2007wireless}
Clayton~W Commander, Panos~M Pardalos, Valeriy Ryabchenko, Stan Uryasev, and
  Grigoriy Zrazhevsky.
\newblock The wireless network jamming problem.
\newblock {\em Journal of Combinatorial Optimization}, 14(4):481--498, 2007.

\bibitem{DalkiranSherali}
Evrim Dalkiran and Hanif Sherali.
\newblock Rlt-pos: Reformulation-linearization technique-based optimization
  software for solving polynomial programming problems.
\newblock {\em Mathematical Programming Computation}, 8, 02 2016.

\bibitem{diamond2016cvxpy}
Steven Diamond and Stephen Boyd.
\newblock {CVXPY}: A {P}ython-embedded modeling language for convex
  optimization.
\newblock {\em The Journal of Machine Learning Research}, 17(1):2909--2913,
  2016.

\bibitem{domahidi2013ecos}
Alexander Domahidi, Eric Chu, and Stephen Boyd.
\newblock {ECOS}: An {SOCP} solver for embedded systems.
\newblock In {\em 2013 European Control Conference (ECC)}, pages 3071--3076.
  IEEE, 2013.

\bibitem{GP}
Mareike Dressler, Sadik Iliman, and Timo de~Wolff.
\newblock An approach to constrained polynomial optimization via nonnegative
  circuit polynomials and geometric programming, 2016.

\bibitem{gonzalez2022computational}
Brais Gonz{\'a}lez-Rodr{\'\i}guez, Joaqu{\'\i}n Ossorio-Castillo, Julio
  Gonz{\'a}lez-D{\'\i}az, {\'A}ngel~M Gonz{\'a}lez-Rueda, David~R Penas, and
  Diego Rodr{\'\i}guez-Mart{\'\i}nez.
\newblock Computational advances in polynomial optimization: {RAPOS}a, a freely
  available global solver.
\newblock {\em Journal of Global Optimization}, pages 1--28, 2022.

\bibitem{henrion2009gloptipoly}
Didier Henrion, Jean-Bernard Lasserre, and Johan L{\"o}fberg.
\newblock Gloptipoly 3: moments, optimization and semidefinite programming.
\newblock {\em Optimization Methods \& Software}, 24(4-5):761--779, 2009.

\bibitem{horst2013global}
Reiner Horst and Hoang Tuy.
\newblock {\em Global optimization: Deterministic approaches}.
\newblock Springer Science \& Business Media, Berlin, 2013.

\bibitem{SONC}
Sadik Iliman and Timo de~Wolff.
\newblock Amoebas, nonnegative polynomials and sums of squares supported on
  circuits.
\newblock {\em Res. Math. Sci.}, 3:Paper No. 9, 35, 2016.

\bibitem{Lower}
Sadik Iliman and Timo de~Wolff.
\newblock Lower bounds for polynomials with simplex {N}ewton polytopes based on
  geometric programming.
\newblock {\em SIAM J. Optim.}, 26(2):1128--1146, 2016.

\bibitem{lasserre2001global}
Jean~B Lasserre.
\newblock Global optimization with polynomials and the problem of moments.
\newblock {\em SIAM Journal on optimization}, 11(3):796--817, 2001.

\bibitem{maher2016pyscipopt}
Stephen Maher, Matthias Miltenberger, Jo{\~a}o~Pedro Pedroso, Daniel Rehfeldt,
  Robert Schwarz, and Felipe Serrano.
\newblock Py{SCIPO}pt: Mathematical programming in {P}ython with the {SCIP}
  {O}ptimization {S}uite.
\newblock In {\em International Congress on Mathematical Software}, pages
  301--307. Springer, 2016.

\bibitem{majumdar2013control}
Anirudha Majumdar, Amir~Ali Ahmadi, and Russ Tedrake.
\newblock Control design along trajectories with sums of squares programming.
\newblock In {\em 2013 IEEE International Conference on Robotics and
  Automation}, pages 4054--4061. IEEE, 2013.

\bibitem{mattingley2010real}
John Mattingley and Stephen Boyd.
\newblock Real-time convex optimization in signal processing.
\newblock {\em IEEE Signal processing magazine}, 27(3):50--61, 2010.

\bibitem{minlplib}
A library of mixed-integer and continuous nonlinear programming instances.
\newblock \url{https://www.minlplib.org}, 2022-10-14.

\bibitem{murray2021signomial}
Riley Murray, Venkat Chandrasekaran, and Adam Wierman.
\newblock Signomial and polynomial optimization via relative entropy and
  partial dualization.
\newblock {\em Mathematical Programming Computation}, 13(2):257--295, 2021.

\bibitem{nesterov2000squared}
Yurii Nesterov.
\newblock Squared functional systems and optimization problems.
\newblock In {\em High performance optimization}, pages 405--440. Springer,
  Boston, 2000.

\bibitem{sostools}
Antonis Papachristodoulou, James Anderson, Giorgio Valmorbida, Stephen Prajna,
  Pete Seiler, Pablo Parrilo, Matthew Peet, and Declan Jagt.
\newblock {\em {SOSTOOLS}: Sum of squares optimization toolbox for {MATLAB}}.
\newblock \texttt{http://arxiv.org/abs/1310.4716}, 2021.
\newblock Available from \texttt{https://github.com/oxfordcontrol/SOSTOOLS}.

\bibitem{parrilo2003semidefinite}
Pablo~A Parrilo.
\newblock Semidefinite programming relaxations for semialgebraic problems.
\newblock {\em Mathematical programming}, 96(2):293--320, 2003.

\bibitem{reznick1978extremal}
Bruce Reznick.
\newblock Extremal {PSD} forms with few terms.
\newblock {\em Duke mathematical journal}, 45(2):363--374, 1978.

\bibitem{seidler2021improved}
Henning Seidler.
\newblock Improved lower bounds for global polynomial optimisation.
\newblock {\em arXiv preprint arXiv:2105.14124}, 2021.

\bibitem{seidler2018experimental}
Henning Seidler and Timo de~Wolff.
\newblock An experimental comparison of {SONC} and {SOS} certificates for
  unconstrained optimization.
\newblock {\em arXiv preprint arXiv:1808.08431}, 2018.

\bibitem{poem:software}
Henning Seidler and Timo de~Wolff.
\newblock {POEM}: Effective methods in polynomial optimization, version
  0.2.1.0(a).
\newblock \url{http://www.iaa.tu-bs.de/AppliedAlgebra/POEM/index.html}, July
  2019.

\bibitem{sherali2012reduced}
Hanif~D Sherali, Evrim Dalkiran, and Leo Liberti.
\newblock Reduced {RLT} representations for nonconvex polynomial programming
  problems.
\newblock {\em Journal of Global Optimization}, 52(3):447--469, 2012.

\bibitem{sherali1997new}
Hanif~D Sherali and Cihan~H Tuncbilek.
\newblock New reformulation linearization/convexification relaxations for
  univariate and multivariate polynomial programming problems.
\newblock {\em Operations Research Letters}, 21(1):1--9, 1997.

\bibitem{shor1987class}
Naum~Z Shor.
\newblock Class of global minimum bounds of polynomial functions.
\newblock {\em Cybernetics}, 23(6):731--734, 1987.

\bibitem{sturmfels2002solving}
Bernd Sturmfels.
\newblock {\em Solving systems of polynomial equations}.
\newblock Number~97. American Mathematical Society, Rhode Island, 2002.

\bibitem{waki2006sums}
Hayato Waki, Sunyoung Kim, Masakazu Kojima, and Masakazu Muramatsu.
\newblock Sums of squares and semidefinite program relaxations for polynomial
  optimization problems with structured sparsity.
\newblock {\em SIAM Journal on Optimization}, 17(1):218--242, 2006.

\bibitem{zheng2021sum}
Yang Zheng and Giovanni Fantuzzi.
\newblock Sum-of-squares chordal decomposition of polynomial matrix
  inequalities.
\newblock {\em Mathematical Programming}, pages 1--38, 2021.

\end{thebibliography}
%% if required, the content of .bbl file can be included here once bbl is generated
%%\input sn-article.bbl

%% Default %%
%%\input sn-sample-bib.tex%

\end{document}